\def\algspacing{\alg@unmargin}
\newlength{\algorithmwidth}
\theoremstyle{plain}
\newtheorem{theorem}{Theorem}[section]
\newtheorem{proposition}[theorem]{Proposition}
\newtheorem{corollary}[theorem]{Corollary}
\newtheorem{lemma}[theorem]{Lemma}
\theoremstyle{definition}
\newtheorem{definition}[theorem]{Definition}
\theoremstyle{remark}
\newtheorem*{remark}{Remark}
\numberwithin{theorem}{section}
\numberwithin{equation}{section}
\newcommand{\bigO}{\mathrm{O}}
\DeclareMathOperator*{\trace}{trace}
\def \E {\mathbb{E}}
\def \eps {\varepsilon}
\def \rank {{\rm rank }}
\newcommand{\vct}[1]{\bm{#1}}
\newcommand{\mtx}[1]{\bm{#1}}
\newcommand{\ip}[2]{\left\langle {#1}, \, {#2} \right\rangle}
\newcommand{\norm}[1]{\lVert {#1} \rVert}
\newcommand{\normsq}[1]{\norm{#1}^2}
\newcommand{\enorm}[1]{\norm{#1}_2}
\newcommand{\enormsq}[1]{\enorm{#1}^2}
\newcommand{\fnorm}[1]{\norm{#1}_{\rm F}}
\newcommand{\fnormsq}[1]{\fnorm{#1}^2}
\newcommand{\infnorm}[1]{\norm{#1}_{\infty}}
\newcommand{\infnormsq}[1]{\infnorm{#1}^2}
\newcommand{\Expect}{\E}
\newcommand{\abs}[1]{\left\vert {#1} \right\vert}
\newcommand{\pinv}{\dagger}
\newcommand{\cnst}[1]{\mathrm{#1}}
\newcommand{\Id}{\mathbf{I}}
\newcommand{\Prob}[1]{\mathbb{P}\left\{ #1 \right\}}
\newcommand{\diag}{\operatorname{diag}}
\begin{document}

\title[Paved with Good Intentions]{Paved with Good Intentions: \\ Analysis of a Randomized Block Kaczmarz Method}
\author{Deanna Needell and Joel A. Tropp}
\date{17 August 2012}

\begin{abstract}
The block Kaczmarz method is an iterative scheme for solving overdetermined least-squares problems.  At each step, the algorithm projects the current iterate onto the solution space of a subset of the constraints.  This paper describes a block Kaczmarz algorithm that uses a randomized control scheme to choose the subset at each step.  This algorithm is the first block Kaczmarz method with an (expected) linear rate of convergence that can be expressed in terms of the geometric properties of the matrix and its submatrices.  The analysis reveals that the algorithm is most effective when it is given a good \emph{row paving} of the matrix, a partition of the rows into well-conditioned blocks.  The operator theory literature provides detailed information about the existence and construction of good row pavings.  Together, these results yield an efficient block Kaczmarz scheme that applies to many overdetermined least-squares problem.
\end{abstract}

\maketitle

\section{Introduction} 

The Kaczmarz method~\cite{Kac37:Angenaeherte-Aufloesung} is an iterative algorithm for solving overdetermined least-squares problems.  Because of its simplicity and performance, this scheme has found application in fields ranging from image reconstruction to digital signal processing~\cite{SS87:Incorporation-A-Priori,CFMSS92:New-Variants,FS95:Kaczmarz-Based-Approach,Nat01:Mathematics-Computerized}.  At each iteration, the basic Kaczmarz method makes progress by enforcing a single constraint, while the block Kaczmarz method~\cite{Elf80:Block-Iterative-Methods} enforces many constraints at once.  This paper introduces a randomized version of the block Kaczmarz method that converges with an expected linear rate, and we characterize the performance of this algorithm using geometric properties of the blocks of equations.  This analysis leads us to consider the concept of a \emph{row paving} of a matrix, a partition of the rows into well-conditioned blocks.  We summarize the literature on row pavings, and we explain how this theory interacts with the block Kaczmarz method.  Together, these results yield an efficient block Kaczmarz scheme that applies to many overdetermined least-squares problems. 



\subsection{Standing Assumptions and Notation} \label{sec:assumptions}

Let $\mtx{A}$ be a real or complex $n \times d$ matrix with full column rank, and suppose that $\vct{b}$ is a vector with dimension $n$.
Consider the overdetermined least-squares problem
\begin{equation} \label{eqn:basic-ls}
\mathrm{minimize} \quad \enormsq{ \mtx{A} \vct{x} - \vct{b} }.
\end{equation}
The symbol $\norm{\cdot}_p$ refers to the $\ell_p$ vector norm for $p \in [1, \infty]$.
We write $\vct{x}_{\star}$ for the unique minimizer of~\eqref{eqn:basic-ls}, and we introduce the residual vector $\vct{e} := \mtx{A} \vct{x}_{\star} - \vct{b}$.

To streamline our discussion, we will often assume that each \emph{row} $\vct{a}_i$ of the matrix $\mtx{A}$ shares the same $\ell_2$ norm:
\begin{equation} \label{eqn:std-matrix}
\enorm{ \vct{a}_i } = 1
\quad\text{for each $i = 1, \dots, n$.}
\end{equation}
We say that $\mtx{A}$ is \emph{standardized} when~\eqref{eqn:std-matrix} is in force.

The spectral norm is denoted by $\norm{\cdot}$, while $\fnorm{\cdot}$ represents the Frobenius norm.
When applied to an Hermitian matrix, the maps $\lambda_{\min}$ and $\lambda_{\max}$ return the algebraic minimum and maximum eigenvalues.
For a $p \times q$ matrix $\mtx{W}$, we arrange the singular values as follows.
$$
\sigma_{\max}(\mtx{W}) := \sigma_1(\mtx{W}) \geq \sigma_2(\mtx{W})
	\geq \dots \geq \sigma_{\min\{p, q\}}(\mtx{W}) =: \sigma_{\min}(\mtx{W}).
$$
The minimum singular value is positive if and only if $\mtx{WW}^*$ or $\mtx{W}^* \mtx{W}$
is nonsingular.  We define the \emph{condition number} $\kappa(\mtx{W}) := \sigma_{\max}(\mtx{W}) / \sigma_{\min}(\mtx{W})$.  The dagger $\pinv$ denotes the Moore--Penrose pseudoinverse.  When $\mtx{W}$ has full row rank, its pseudoinverse is determined by the formula $\mtx{W}^\pinv := \mtx{W}^* (\mtx{WW}^*)^{-1}$.



\subsection{The Simple Kaczmarz Method} \label{sec:simple-kacz}


The Kaczmarz method is an iterative algorithm that produces an approximation to the minimizer $\vct{x}_{\star}$ of the least-squares problem~\eqref{eqn:basic-ls}.  The method commences with an arbitrary guess $\vct{x}_0$ for the solution.  At the $j$th iteration, we select a row index $t = t(j)$ of the matrix $\mtx{A}$, and we project the current iterate $\vct{x}_{j-1}$ onto the solution space of the equation $\ip{ \vct{a}_t }{ \vct{x} } = b_t$.  That is,
\begin{equation} \label{eqn:simple-kacz}
\vct{x}_j = \vct{x}_{j-1} + \frac{b_t - \ip{ \vct{a}_t }{ \vct{x}_{j-1} }}
{\enormsq{\vct{a}_t}} \, \vct{a}_t.
\end{equation}
This process continues until it triggers an appropriate convergence criterion.


%

To develop a complete algorithm, we also need a control mechanism that specifies how to select rows.  For example, the most classical approach cycles through the rows in order.  Instead, we focus on a modern formulation that uses a \emph{randomized} control mechanism.  Randomization has several benefits: the resulting algorithm is easy to analyze, it is simple to implement, and it is often effective in practice.

Our primary reference is the randomized Kaczmarz algorithm recently proposed by Strohmer and Vershynin~\cite{SV09:Randomized-Kaczmarz}.
When $\mtx{A}$ is standardized, their method operates as follows.  At iteration $j$, independently of all previous random choices, the algorithm draws the row index $t(j)$ uniformly at random from the set $\{1, \dots, n\}$ of all row indices.
Then the current iterate is updated using the rule~\eqref{eqn:simple-kacz}.
The paper~\cite{SV09:Randomized-Kaczmarz} provides a short, elegant proof that
this iteration converges at an expected linear rate to the solution $\vct{x}_{\star}$ of a consistent least-squares problem (i.e., where the residual $\vct{e}$ is zero).


Needell~\cite{Nee10:Randomized-Kaczmarz} has extended the argument of~\cite{SV09:Randomized-Kaczmarz} to the case of an \emph{inconsistent} least-squares problem.
For a standardized matrix $\mtx{A}$, Needell's error estimate reads
\begin{equation}\label{eqn:Needell-rate}
\Expect \enormsq{ \vct{x}_j - \vct{x}_\star }
	\ \leq \ \left[ 1 - \frac{\sigma^2_{\min}(\mtx{A})}{n} \right]^j \enormsq{ \vct{x}_0 - \vct{x}_\star }
	\ + \ \frac{n \infnormsq{\vct{e}}}{\sigma_{\min}^2(\mtx{A})}.
\end{equation}
In words, the randomized Kaczmarz method converges in expectation at a linear rate%
\footnote{Mathematicians often use the term \emph{exponential convergence} for the concept numerical analysts call \emph{linear convergence}.}
until it reaches a fixed ball about the true solution $\vct{x}_{\star}$, at which point the error may cease to decay.  The radius of this ball roughly equals the second term in~\eqref{eqn:Needell-rate}, while
the convergence rate is controlled by the bracket.
When the residual $\vct{e}$ is zero, the bound~\eqref{eqn:Needell-rate} reduces to the error estimate from~\cite{SV09:Randomized-Kaczmarz}.

When $\mtx{A}$ is an $n\times d$ standardized matrix whose columns are well conditioned, the minimum singular value $\sigma^2_{\min}(\mtx{A}) \geq \textrm{const} \cdot n / d$.  In this case, the error bound~\eqref{eqn:Needell-rate} simplifies to
$$
\Expect \enormsq{ \vct{x}_j - \vct{x}_\star }
	\ \lesssim \ \left[ 1 - \frac{\textrm{const}}{d} \right]^j \enormsq{ \vct{x}_0 - \vct{x}_\star }
	\ + \ \frac{d}{\textrm{const}} \infnormsq{\vct{e}}.
$$
It follows that $\bigO(d)$ iterations of the Kaczmarz method suffice to reduce the error by a constant fraction, provided that the squared error is substantially larger than $d \infnormsq{\vct{e}}$.

\subsection{The Block Kaczmarz Method} \label{sec:block-kacz}


In some situations~\cite{EHL81:Iterative-Algorithms}, practitioners prefer to use a block version of the Kaczmarz method to solve the least-squares problem~\eqref{eqn:basic-ls}.  We consider a formulation due to Elfving~\cite{Elf80:Block-Iterative-Methods}.  This procedure begins with an initial guess $\vct{x}_0$ for the solution.  At each iteration $j$, we select a subset $\tau = \tau(j)$ of the row indices of $\mtx{A}$, and we project the current iterate $\vct{x}_{j-1}$ onto the solution space of $\mtx{A}_{\tau} \vct{x} = \vct{b}_{\tau}$, the set of equations listed in $\tau$.  That is,
\begin{equation}\label{eqn:block-kacz}
\vct{x}_j = \vct{x}_{j-1} + (\mtx{A}_\tau)^\pinv(\vct{b}_\tau - \mtx{A}_\tau\vct{x}_{j-1}).
\end{equation}
This process continues until it has converged.
We have written $\mtx{A}_{\tau}$ for the row submatrix of $\mtx{A}$ indexed by $\tau$, while $\vct{b}_{\tau}$ is the subvector of $\vct{b}$ with components listed in $\tau$.  We assume that the row submatrix $\mtx{A}_{\tau}$ is fat%
\footnote{A $p \times q$ matrix is \emph{fat} when $p \leq q$.},
so the pseudoinverse~\eqref{eqn:block-kacz} returns the solution to an underdetermined least-squares problem.




To specify a block Kaczmarz algorithm, one must decide what blocks of indices are permissible, as well as the mechanism for selecting a block at each iteration.  In this paper, we study a version that is based on two design decisions.  First, this algorithm requires a partition $T = \{ \tau_1, \dots, \tau_m \}$ of the row indices of $\mtx{A}$.  The method only considers blocks of indices that appear in the partition $T$.  Second, we use a simple randomized control scheme to choose which block to enforce.  At each iteration, independently of all previous choices, we draw a block $\tau$ uniformly at random from the partition $T$.  These decisions lead to Algorithm~\ref{alg:randomized-block-kaczmarz}.
We postpone a detailed discussion on implementation to Section~\ref{sec:implementation}.

We make no claim that randomized selection provides the optimal sequence for choosing blocks.  As with the simple Kaczmarz method, the scaling of the rows of the matrix can play a significant role in the behavior of the algorithm.  See~\cite{CHJ09:Note-Behavior,SV09:Comments-Randomized} for a discussion of this issue.

\begin{center}
\begin{algorithm}[tb]
\caption{Block Kaczmarz Method with Uniform Random Control}
	\label{alg:randomized-block-kaczmarz}
\begin{center} \fbox{
\begin{minipage}{.95\textwidth} 
\vspace{4pt}
\alginout{\begin{itemize}
\item	Matrix $\mtx{A}$ with dimension $n \times d$
\item	Right-hand side $\vct{b}$ with dimension $n$
\item	Partition $T = \{\tau_1, \dots, \tau_m\}$ of the row indices $\{1, \dots, n\}$
\item	Initial iterate $\vct{x}_0$ with dimension $d$
\item	Convergence tolerance $\varepsilon > 0$
\end{itemize}}
{An estimate $\hat{\vct{x}}$ for the solution to $\min_{\vct{x}} \enormsq{ \mtx{A}\vct{x} - \vct{b} }$
}
\vspace{8pt}\hrule\vspace{8pt}

\begin{algtab*}
$j \leftarrow 0$

\algrepeat 
	$j \leftarrow j + 1$ \\
	Choose a block $\tau$ uniformly at random from $T$ \\
	$\vct{x}_j \leftarrow \vct{x}_{j-1} + (\mtx{A}_\tau)^\pinv (\vct{b}_\tau - \mtx{A}_\tau\vct{x}_{j-1})$ \hfill \{ Solve least-squares problem \} \\ 
\alguntil{$\enormsq{ \mtx{A}\vct{x}_j - \vct{b} } \leq \eps^2$}
$\hat{\vct{x}} \leftarrow \vct{x}_j$
\end{algtab*}
\end{minipage}}
\end{center}
\end{algorithm}
\end{center}

\subsection{Desiderata for the Partition} \label{sec:desiderata}

The implementation and behavior of the block Kaczmarz method
depend heavily on the properties of the submatrices $\mtx{A}_\tau$ indexed by the
blocks $\tau$ in the partition $T$.  Let us explain how the structure of the 
submatrices plays a role in the implementation; the claims about performance will
emerge from the theoretical results in Section~\ref{sec:intro-main}.

The most expensive (arithmetic) step in Algorithm~\ref{alg:randomized-block-kaczmarz}
occurs when we apply the pseudoinverse $\mtx{A}_{\tau}^\pinv$ to a vector.
We can perform this calculation efficiently \emph{provided that}
each submatrix $\mtx{A}_{\tau}$ has well-conditioned rows.
Indeed, in this case, we can invoke an iterative least-squares solver~\cite{Bjo96:Numerical-Methods},
such as {\tt CGLS},
to apply the pseudoinverse $\mtx{A}_\tau^\pinv$
approximately using a small number of matrix--vector
multiplies with $\mtx{A}_{\tau}$ and $\mtx{A}_{\tau}^*$.
In particular, we never need to form the pseudoinverse.



%
%
%

This observation highlights how important it is to control the geometric properties
of the submatrices $\mtx{A}_{\tau}$ induced by $T$.  Let us make a definition
that encapsulates the information 
that we will need.




\begin{definition}[Row Paving]
An $(m, \alpha, \beta)$ \emph{row paving} of a matrix $\mtx{A}$ is a partition $T = \{ \tau_1, \dots, \tau_m \}$ of the row indices that verifies
$$
\alpha \leq \lambda_{\min}( \mtx{A}_\tau \mtx{A}_{\tau}^* )
\quad\text{and}\quad
\lambda_{\max}( \mtx{A}_{\tau} \mtx{A}_{\tau}^* ) \leq \beta
\quad\text{for each $\tau \in T$.}
$$
The number $m$ of blocks is called the \emph{size} of the paving.  The numbers $\alpha$ and $\beta$ are called \emph{lower} and \emph{upper paving bounds}.  The ratio $\beta / \alpha$ gives a uniform bound on the squared condition number $\kappa^2(\mtx{A}_{\tau})$ for each $\tau$.  Note that $\alpha = 0$ unless each submatrix $\mtx{A}_{\tau}$ is fat.
\end{definition}

Every partition $T$ of the rows of a matrix $\mtx{A}$ has associated paving parameters $(m, \alpha, \beta)$.  In a moment, we will see how these quantities play a role in the performance of the algorithm.  Roughly speaking, it is best that the size $m$, the upper bound $\beta$, and the conditioning $\beta/\alpha$ of the paving are small.  Later, in Sections~\ref{sec:intro-paving} and~\ref{sec:pavings}, we will discuss what kind of bounds we can expect on the paving parameters, as well as computational methods for producing good pavings.  Note that, for a row paving to be useful in our context, the cost of producing the paving must not exceed the cost of solving the least-squares problem by other means!

%
%
%

\subsection{Convergence of Randomized Block Kaczmarz} \label{sec:intro-main}

The main result of this paper provides information about the convergence properties of the randomized block Kaczmarz method, Algorithm~\ref{alg:randomized-block-kaczmarz}, in terms of the parameters of the row paving $T$.

\begin{theorem}[Convergence] \label{thm:convergence}
Suppose $\mtx{A}$ is a matrix with full column rank that admits an $(m, \alpha, \beta)$ row paving $T$.  Consider the least-squares problem
$$
\mathrm{minimize} \quad \enormsq{ \mtx{A} \vct{x} - \vct{b} }.
$$
Let $\vct{x}_{\star}$ be the unique minimizer, and define the residual $\vct{e} := \mtx{A} \vct{x}_{\star} - \vct{b}$.
For any initial estimate $\vct{x}_0$, the randomized block Kaczmarz method, Algorithm~\ref{alg:randomized-block-kaczmarz}, produces a sequence $\{\vct{x}_j : j \geq 0 \}$ of iterates that satisfies
\begin{equation} \label{eqn:block-rate}
\Expect \enormsq{ \vct{x}_j - \vct{x}_{\star} }
	\ \leq \ \left[ 1 - \frac{\sigma_{\min}^2(\mtx{A})}{\beta m} \right]^{j}
	\enormsq{ \vct{x}_0 - \vct{x}_{\star} }
	\ + \ \frac{\beta}{\alpha} \cdot \frac{\enormsq{\vct{e}}}{ \sigma_{\min}^2(\mtx{A}) }.
\end{equation}
\end{theorem}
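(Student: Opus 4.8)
The plan is to track the error vector $\vct{d}_j := \vct{x}_j - \vct{x}_\star$, derive a one-step contraction for $\Expect \enormsq{\vct{d}_j}$, and then unroll it. First I would rewrite the update~\eqref{eqn:block-kacz}. Since $\vct{x}_\star$ minimizes the least-squares objective, the normal equations give $\mtx{A}^*\vct{e} = \vct{0}$, and by definition of the residual $\vct{b}_\tau = \mtx{A}_\tau \vct{x}_\star - \vct{e}_\tau$, where $\vct{e}_\tau$ is the restriction of $\vct{e}$ to the block $\tau$. Substituting this and subtracting $\vct{x}_\star$ yields
$$
\vct{d}_j = (\Id - \mtx{P}_\tau)\vct{d}_{j-1} - \mtx{A}_\tau^\pinv \vct{e}_\tau,
\qquad \mtx{P}_\tau := \mtx{A}_\tau^\pinv \mtx{A}_\tau,
$$
where $\mtx{P}_\tau$ is the orthogonal projector onto the row space $\range(\mtx{A}_\tau^*)$. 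Here I use that $\alpha > 0$ forces $\mtx{A}_\tau$ to have full row rank, so that $\mtx{A}_\tau^\pinv = \mtx{A}_\tau^*(\mtx{A}_\tau\mtx{A}_\tau^*)^{-1}$.

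The crucial structural observation is that the two summands are orthogonal: $(\Id - \mtx{P}_\tau)\vct{d}_{j-1}$ lies in $\range(\mtx{A}_\tau^*)^\perp$, whereas $\mtx{A}_\tau^\pinv\vct{e}_\tau \in \range(\mtx{A}_\tau^*)$. The Pythagorean theorem therefore splits the error cleanly as $\enormsq{\vct{d}_j} = \enormsq{(\Id - \mtx{P}_\tau)\vct{d}_{j-1}} + \enormsq{\mtx{A}_\tau^\pinv\vct{e}_\tau}$. I would then bound each piece with the paving bounds. For the first, $\enormsq{(\Id - \mtx{P}_\tau)\vct{d}_{j-1}} = \enormsq{\vct{d}_{j-1}} - \vct{d}_{j-1}^*\mtx{P}_\tau\vct{d}_{j-1}$, and the quadratic form equals $(\mtx{A}_\tau\vct{d}_{j-1})^*(\mtx{A}_\tau\mtx{A}_\tau^*)^{-1}(\mtx{A}_\tau\vct{d}_{j-1}) \ge \beta^{-1}\enormsq{\mtx{A}_\tau\vct{d}_{j-1}}$ because $\lambda_{\max}(\mtx{A}_\tau\mtx{A}_\tau^*) \le \beta$. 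For the second, the same algebra gives $\enormsq{\mtx{A}_\tau^\pinv\vct{e}_\tau} = \vct{e}_\tau^*(\mtx{A}_\tau\mtx{A}_\tau^*)^{-1}\vct{e}_\tau \le \alpha^{-1}\enormsq{\vct{e}_\tau}$ since $\lambda_{\min}(\mtx{A}_\tau\mtx{A}_\tau^*) \ge \alpha$. Hence, for each fixed block,
$$
\enormsq{\vct{d}_j} \le \enormsq{\vct{d}_{j-1}} - \frac{1}{\beta}\enormsq{\mtx{A}_\tau\vct{d}_{j-1}} + \frac{1}{\alpha}\enormsq{\vct{e}_\tau}.
$$

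Next I would average over the random block. Conditioning on $\vct{x}_{j-1}$ and using that $\tau$ is drawn uniformly from $T$, the fact that $T$ partitions the rows makes the block quantities telescope: $\Expect_\tau \enormsq{\mtx{A}_\tau\vct{d}_{j-1}} = \frac1m\sum_k\enormsq{\mtx{A}_{\tau_k}\vct{d}_{j-1}} = \frac1m\enormsq{\mtx{A}\vct{d}_{j-1}}$, and likewise $\Expect_\tau\enormsq{\vct{e}_\tau} = \frac1m\enormsq{\vct{e}}$. Since $\mtx{A}$ has full column rank, $\enormsq{\mtx{A}\vct{d}_{j-1}} \ge \sigma_{\min}^2(\mtx{A})\enormsq{\vct{d}_{j-1}}$, so taking total expectations gives the one-step recursion $\Expect\enormsq{\vct{d}_j} \le c\,\Expect\enormsq{\vct{d}_{j-1}} + \frac{1}{\alpha m}\enormsq{\vct{e}}$ with $c := 1 - \sigma_{\min}^2(\mtx{A})/(\beta m)$. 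Iterating from $\vct{d}_0$ and summing the geometric series $\sum_{k \ge 0}c^k = (1-c)^{-1}$ produces exactly~\eqref{eqn:block-rate}, the constant $(1-c)^{-1}\cdot\frac{1}{\alpha m} = \frac{\beta}{\alpha}\sigma_{\min}^{-2}(\mtx{A})$ matching the stated residual term. To legitimize the geometric sum I would note $c \in [0,1)$: we have $c < 1$ since $\sigma_{\min}^2(\mtx{A}) > 0$, while $c \ge 0$ follows from $\sigma_{\min}^2(\mtx{A}) \le \sigma_{\max}^2(\mtx{A}) = \lambda_{\max}(\sum_\tau \mtx{A}_\tau^*\mtx{A}_\tau) \le m\beta$.

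I expect the main obstacle to be conceptual rather than computational: recognizing the orthogonal decomposition that isolates the contracting term $(\Id-\mtx{P}_\tau)\vct{d}_{j-1}$ from the irreducible noise term $\mtx{A}_\tau^\pinv\vct{e}_\tau$. Getting this split right is what lets the inconsistent case $\vct{e} \neq \vct{0}$ be handled without messy cross terms, and it is precisely what routes the two paving bounds into their distinct roles, with $\beta$ governing the contraction factor and $\alpha$ governing the size of the convergence horizon.
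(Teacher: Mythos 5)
Your proposal is correct and follows essentially the same route as the paper: the same orthogonal decomposition of the error into $(\Id - \mtx{A}_\tau^\pinv\mtx{A}_\tau)(\vct{x}_{j-1}-\vct{x}_\star)$ plus $\mtx{A}_\tau^\pinv\vct{e}_\tau$, the same averaging of both pieces over the uniformly random block using the partition structure, and the same geometric-series unrolling yielding the horizon $\frac{\beta}{\alpha}\cdot\frac{\enormsq{\vct{e}}}{\sigma_{\min}^2(\mtx{A})}$. The only cosmetic differences are that you bound the two terms via the quadratic form $(\mtx{A}_\tau\mtx{A}_\tau^*)^{-1}$ where the paper uses singular-value inequalities, and you add the (welcome but inessential) verification that the contraction factor lies in $[0,1)$.
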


\noindent
Turn to Section~\ref{sec:results} for the proof of Theorem~\ref{thm:convergence}.

The expression~\eqref{eqn:block-rate} states that the block Kaczmarz method exhibits an expected linear rate of convergence until it reaches a ball about the true solution.  The radius of this ball, which we call the \emph{convergence horizon}, is comparable with the second term on the right-hand side of~\eqref{eqn:block-rate}.  The bracket controls the convergence rate.  The minimum singular value of $\mtx{A}$ affects both the rate of convergence and the convergence horizon.  In each case, we prefer $\sigma_{\min}(\mtx{A})$ to be as large as possible.

The properties of the row paving play an interesting role in Theorem~\ref{thm:convergence}.  Curiously, the rate of convergence depends only on the upper paving bound $\beta$ and the number $m$ of blocks in the paving.  On the other hand, the convergence horizon reflects the conditioning $\beta / \alpha$ of the paving.  Thus, the conditioning of the paving only affects the error bound when the least-squares problem is inconsistent (i.e., $\vct{e}$ is nonzero).
Nevertheless, as Section~\ref{sec:desiderata} suggests, we usually want the paving to be well conditioned to ensure that we can apply the block update rule~\eqref{eqn:block-kacz} efficiently.



%
%
%

\subsection{Simple Kaczmarz versus Block Kaczmarz}

First, notice that Theorem~\ref{thm:convergence} improves on the earlier result~\eqref{eqn:Needell-rate} for the simple Kaczmarz method.  Indeed, the simple Kaczmarz method is equivalent to using a row paving with $n$ blocks, where each block contains exactly one of the $n$ rows.  When $\mtx{A}$ is standardized, the paving constants satisfy $\alpha = \beta = 1$, and we reach the error bound
$$
\Expect \enormsq{ \vct{x}_j - \vct{x}_{\star} }
	\ \leq \ \left[ 1 - \frac{\sigma_{\min}^2(\mtx{A})}{n} \right]^{j}
	\enormsq{ \vct{x}_0 - \vct{x}_{\star} }
	\ + \ \frac{\enormsq{\vct{e}}}{ \sigma_{\min}^2(\mtx{A}) }.
$$
The convergence horizon $\normsq{\vct{e}} \leq n \infnormsq{\vct{e}}$, so the displayed bound beats~\eqref{eqn:Needell-rate} when $\mtx{A}$ is standardized.

More generally,
suppose $\mtx{A}$ is a standardized matrix with an $(m, \alpha, \beta)$
row paving $T$.  Let us compare the simple Kaczmarz algorithm with uniformly random control (Section~\ref{sec:simple-kacz}) to the block Kaczmarz method, Algorithm~\ref{alg:randomized-block-kaczmarz}.  Both methods satisfy an error bound of the form
$$
\Expect \enormsq{\vct{x}_j - \vct{x}_{\star}}
	\ \leq \ \cnst{e}^{-j \rho} \cdot \enormsq{\vct{x}_0 - \vct{x}_{\star}} \ + \ h
$$
where the convergence rate $\rho$ and the convergence horizon $h$ depend on the choice of algorithm.


First, we compare the convergence rates of the two methods.  The bounds~\eqref{eqn:Needell-rate}
and~\eqref{eqn:block-rate} imply
\begin{equation} \label{eqn:compare-rates}
\rho_{\rm simp} \geq \frac{\sigma_{\min}^2(\mtx{A})}{n}
\quad\text{and}\quad
\rho_{\rm block} \geq \frac{\sigma_{\min}^2(\mtx{A})}{\beta m}
\end{equation}
because $\log(1 - t) \leq -t$ when $t < 1$.
In other words, the simple method requires a factor
$n/(\beta m)$ more iterations than the block method to achieve the
same reduction in error.


Next, we examine the convergence horizons.  The bounds~\eqref{eqn:Needell-rate}
and~\eqref{eqn:block-rate} yield
$$
h_{\rm simp} = \frac{n \infnormsq{\vct{e}}}{\sigma_{\min}^2(\mtx{A})}
\quad\text{and}\quad
h_{\rm block} = \frac{\beta}{\alpha} \cdot \frac{\enormsq{\vct{e}}}{\sigma_{\min}^2(\mtx{A})}.
$$
Their ratio satisfies
$$
\frac{h_{\rm block}}{h_{\rm simp}}
	= \frac{\beta}{\alpha} \cdot \frac{\enormsq{\vct{e}}}{n \infnormsq{\vct{e}}}
	\leq \frac{\beta}{\alpha}.
$$
We see that the convergence horizon $h_{\rm block}$ for the block method never exceeds $h_{\rm simp}$
by a factor larger than the conditioning $\beta/\alpha$ of the row paving.  But $h_{\rm block}$ may
be substantially smaller than $h_{\rm simp}$ if the components of the residual vector are highly nonuniform.

To make more detailed claims about the relative merits of the two algorithms, we describe two situations where we have additional information about the structure of the matrix $\mtx{A}$, or a lack thereof.


\subsubsection{Example 1: Fast Updates} \label{sec:fast-update}

First, we consider the case where the computational cost of the block update
rule~\eqref{eqn:block-kacz} is roughly comparable with the cost of the
simple update rule~\eqref{eqn:simple-kacz}.  This situation can occur when
\begin{itemize}
\item	Each submatrix $\mtx{A}_{\tau}$ admits a fast multiply; and

\item	Each submatrix $\mtx{A}_{\tau}$ is well conditioned.
\end{itemize}
See Section~\ref{sec:experiments-circ} for a numerical example where these properties hold.

In this setting, one iteration of the block method has roughly the same
cost as one iteration of the standard method.  As a consequence, the
comparison~\eqref{eqn:compare-rates} of the convergence rates provides
a reasonable assessment of how much each algorithm reduces the error per unit of
arithmetic.  We see that the block algorithm really is about $n/(\beta m)$
times faster than the simple Kaczmarz method.  When $\beta m$ is small in
comparison with $n$, this represents a massive acceleration.

\subsubsection{Example 2: Unstructured Submatrices} \label{sec:unstruct}

On the other hand, suppose that each submatrix $\mtx{A}_{\tau}$ is unstructured and dense.
Then the block method may involve much more arithmetic per iteration than the
simple method.  As a consequence, the comparison~\eqref{eqn:compare-rates}
is unfair to the simple method.

In this case, it is more appropriate to examine the convergence rate
per \emph{epoch}, the minimum number of iterations it takes the algorithm
to touch each row of $\mtx{A}$ once.
For the simple Kaczmarz method, an epoch consists of $n$ iterations;
for the block method, an epoch consists of $m$ iterations.
In this setting, each algorithm requires about the same amount of arithmetic in one epoch,
so we consider the per-epoch convergence rates:
$$
m \cdot \rho_{\rm block} \geq \frac{\sigma^2_{\min}(\mtx{A})}{\beta}
\quad\text{and}\quad
n \cdot \rho_{\rm simp} \geq \sigma^2_{\min}(\mtx{A}).
$$
We see that, in theory, the per-epoch convergence of the block method
is worse, and the disadvantage increases with the upper bound $\beta$
on the paving.  The best case for the block method occurs if $\beta = 1$.
This may happen, for example, when each block in the paving contains
a single row of the matrix ($m=n$).

In practice, the block method displays better convergence behavior
than this estimate suggests.  The block method accrues
further advantages because of subtle computational issues involving
data transfer and basic linear algebra subroutines ({\tt BLASx}).
We discuss these points in Section~\ref{sec:why}, and we provide 
some numerical support in Section~\ref{sec:experiments-dense}.


%
%

\subsection{Existence of Good Pavings} \label{sec:intro-paving}

So far, we have assumed that the matrix $\mtx{A}$ comes packaged with a
natural row paving $T$.  For some of the applications we have in mind,
this hypothesis is reasonable.  Nevertheless, the block Kaczmarz method would
be more versatile if we could construct row pavings for a broad class
of matrices.  To that end, Popa~\cite{Pop99:Block-Projections-Algorithms}
has developed an approach for producing a paving of a sparse matrix;
see also~\cite{Pop01:Fast-Kaczmarz-Kovarik,Pop04:Kaczmarz-Kovarik-Algorithm}.
But we can travel much farther down this road.
It is an astonishing fact that \emph{every}
standardized matrix admits a good row paving.

\begin{proposition}[Existence of Good Row Pavings] \label{prop:intro-paving}
Fix a number $\delta \in (0, 1)$.  Let $\mtx{A}$ be a standardized matrix with $n$ rows.
Then $\mtx{A}$ admits a row paving whose parameters satisfy
$$
m \leq \cnst{C_{pave}} \cdot \delta^{-2} \normsq{\mtx{A}} \log(1+n)
\quad\text{and}\quad
1 - \delta \leq \alpha \leq \beta \leq 1 + \delta.
$$
The number $\cnst{C_{pave}}$ is a positive, universal\,%
\footnote{A \emph{universal} constant has no dependence on any parameter.}
constant.
\end{proposition}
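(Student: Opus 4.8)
The plan is to establish Proposition~\ref{prop:intro-paving} by appealing to the operator-theoretic paving results that originate in the study of the Kadison--Singer problem. The key object is the \emph{paving} of a Hermitian matrix with small diagonal: if $\mtx{H}$ is an $n \times n$ Hermitian matrix with zero diagonal, then for a prescribed $\delta \in (0,1)$ one can partition $\{1, \dots, n\}$ into roughly $m \approx \delta^{-2} \norm{\mtx{H}}^2 \log(1+n)$ blocks $\tau$ such that each principal submatrix satisfies $\norm{\mtx{H}_{\tau\tau}} \leq \delta \norm{\mtx{H}}$. The cleanest route is to quote a noncommutative paving theorem of this type (for example, the results of Bourgain--Tzafriri, or the sharper logarithmic-factor versions established via the matrix Chernoff / noncommutative concentration machinery). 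I would cite the relevant theorem from the paving literature and then show that applying it to the correct matrix yields the claimed row paving of $\mtx{A}$.

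First I would set up the reduction. Form the Gram-type matrix $\mtx{G} := \mtx{A}\mtx{A}^*$, an $n \times n$ Hermitian positive semidefinite matrix. Because $\mtx{A}$ is standardized, \eqref{eqn:std-matrix} gives $G_{ii} = \enormsq{\vct{a}_i} = 1$ for every $i$, so the diagonal of $\mtx{G}$ is the identity. Writing $\mtx{G} = \Id + \mtx{H}$ where $\mtx{H} := \mtx{G} - \Id$ has zero diagonal, I would observe that $\norm{\mtx{H}} \leq \norm{\mtx{G}} = \normsq{\mtx{A}}$, since $\mtx{H}$ is obtained from the positive semidefinite matrix $\mtx{G}$ by subtracting the identity and a standard estimate bounds the spectral norm of the off-diagonal part. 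For any index block $\tau$ the principal submatrix of $\mtx{G}$ is exactly $\mtx{A}_\tau \mtx{A}_\tau^*$, and it decomposes as $\mtx{A}_\tau \mtx{A}_\tau^* = \Id + \mtx{H}_{\tau\tau}$. Consequently the eigenvalue constraints in the definition of a row paving translate directly into a bound on $\norm{\mtx{H}_{\tau\tau}}$: if $\norm{\mtx{H}_{\tau\tau}} \leq \delta$, then by Weyl's inequality every eigenvalue of $\mtx{A}_\tau \mtx{A}_\tau^*$ lies in $[1-\delta, 1+\delta]$, which is precisely $1 - \delta \leq \lambda_{\min}(\mtx{A}_\tau\mtx{A}_\tau^*)$ and $\lambda_{\max}(\mtx{A}_\tau\mtx{A}_\tau^*) \leq 1+\delta$.

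Next I would invoke the paving theorem on $\mtx{H}$ with parameter $\delta' := \delta / \norm{\mtx{H}}$ (rescaled so that the conclusion reads $\norm{\mtx{H}_{\tau\tau}} \leq \delta' \norm{\mtx{H}} = \delta$), which produces a partition into $m \lesssim (\delta')^{-2}\log(1+n) = \delta^{-2} \norm{\mtx{H}}^2 \log(1+n) \leq \cnst{C_{pave}} \cdot \delta^{-2} \normsq{\mtx{A}} \log(1+n)$ blocks with $\norm{\mtx{H}_{\tau\tau}} \leq \delta$ for each $\tau$. Combining this with the eigenvalue translation above yields exactly the stated bounds on $m$, $\alpha$, and $\beta$, with $\cnst{C_{pave}}$ inherited from the universal constant in the paving theorem. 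The main obstacle is not in this reduction, which is essentially bookkeeping, but in locating and citing a paving result with the correct \emph{quantitative} dependence: one needs the number of blocks to scale like $\delta^{-2}\norm{\mtx{H}}^2$ up to a logarithmic factor in $n$, and obtaining the logarithmic (rather than polynomial) dependence requires the more refined concentration-based arguments. I would defer the full statement and proof of that paving theorem to Section~\ref{sec:pavings}, where the constant $\cnst{C_{pave}}$ is pinned down, and here simply record how it implies the proposition.
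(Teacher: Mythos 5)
There is a genuine gap, and it is precisely the one the paper warns about in its remark on ``Paving a Square Matrix.'' Your reduction to the hollow Gram matrix $\mtx{H} = \mtx{AA}^* - \Id$ is sound as bookkeeping (the translation $\norm{\mtx{H}_{\tau\tau}} \leq \delta \Rightarrow \lambda(\mtx{A}_\tau\mtx{A}_\tau^*) \in [1-\delta,1+\delta]$ is correct), but the quantitative conclusion does not follow. To get the \emph{absolute} bound $\norm{\mtx{H}_{\tau\tau}} \leq \delta$ from a square-matrix paving theorem whose conclusion is \emph{relative}, $\norm{\mtx{H}_{\tau\tau}} \leq \delta' \norm{\mtx{H}}$, you must take $\delta' = \delta/\norm{\mtx{H}}$, and the resulting block count is $m \lesssim (\delta')^{-2}\log(1+n) = \delta^{-2}\norm{\mtx{H}}^2\log(1+n)$. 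Since $\norm{\mtx{H}} \leq \normsq{\mtx{A}}$ (and can be of that order), this gives $m \lesssim \delta^{-2}\norm{\mtx{A}}^4\log(1+n)$, \emph{not} $\delta^{-2}\normsq{\mtx{A}}\log(1+n)$: your final inequality ``$\delta^{-2}\norm{\mtx{H}}^2\log(1+n) \leq \cnst{C_{pave}}\cdot\delta^{-2}\normsq{\mtx{A}}\log(1+n)$'' silently replaces $\norm{\mtx{H}}^2 \approx \norm{\mtx{A}}^4$ by $\norm{\mtx{A}}^2$ and is false in general. This quartic dependence is exactly the ``excessive dependence on the spectral norm'' the paper attributes to the hollow-Gram route (compare the Bourgain--Tzafriri bound $\cnst{const}\cdot\delta^2/\norm{\mtx{A}}^4$ discussed in Section~\ref{sec:ss-related}). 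There is also a secondary issue: a square-matrix paving theorem with block count $\delta'^{-2}\log(1+n)$, uniform over all zero-diagonal Hermitian matrices, was not available in the literature at the level of generality you assume, so the key lemma you propose to ``locate and cite'' is itself in doubt.

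The paper's actual route avoids the Gram matrix entirely. It quotes a \emph{subset selection} theorem stated directly for the rectangular standardized matrix (Proposition~\ref{prop:subset-select}, from Vershynin and Tropp): there is a subset $\tau$ of at least $\cnst{c_{ss}}\delta^2 n/\normsq{\mtx{A}}$ rows with $\lambda(\mtx{A}_\tau\mtx{A}_\tau^*) \subset [1-\delta,1+\delta]$. One then greedily bites off such a subset, applies the same theorem to the remaining rows (whose submatrix has spectral norm at most $\norm{\mtx{A}}$), and iterates; each step removes a fixed fraction $\cnst{c_{ss}}\delta^2/\normsq{\mtx{A}}$ of what remains, so the rows are exhausted after $\cnst{C_{pave}}\cdot\delta^{-2}\normsq{\mtx{A}}\log(1+n)$ steps. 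The correct quadratic dependence on $\norm{\mtx{A}}$ comes from the subset selection theorem itself, not from any reduction through $\mtx{AA}^* - \Id$. If you want to salvage your write-up, you should replace the square-matrix paving input with this subset-selection-plus-iteration argument.
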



Proposition~\ref{prop:intro-paving} follows most directly
from the recent results~\cite[Cor.~1.5]{Ver06:Random-Sets}
and~\cite[Thm.~1.2]{Tro09:Column-Subset},
whose provenance can be traced to the celebrated
papers~\cite{BT87:Invertibility-Large,BT91:Problem-Kadison-Singer}.
Although Proposition~\ref{prop:intro-paving} is only an existential result, the literature describes several efficient algorithms for constructing row pavings.  In particular, under some additional conditions, it is possible to pave a matrix by partitioning its rows \emph{at random}.  See Section~\ref{sec:pavings} for more results and background on paving.

\subsubsection{Understanding the Paving Theorem}

Before we continue, let us take a moment to explain some of the key aspects of Proposition~\ref{prop:intro-paving}.
The main point is that the size of the paving depends only on the spectral norm of the matrix---not on the smallest singular value.  As a consequence, it is possible to pave matrices with substantial null spaces!

A second point is that we can make the squared conditioning $\beta / \alpha$ as close to one as we desire.  In particular, the choice $\delta = 0.5$ yields $\beta/\alpha \leq 3$.  This property licenses us to apply an iterative algorithm to solve least-squares problems involving the submatrices induced by the paving, as described in Section~\ref{sec:desiderata}.  We remark that the dependence of the paving size $m$ on the parameter $\delta$ is optimal%
\footnote{To verify this point, consider a large matrix $\mtx{A}$ whose entries have equal magnitude and independent random signs.  Use the Bai--Yin Law~\cite[Thm.~2]{BY93:Limit-Smallest} to estimate singular values and norms.}
as $\delta \to 0$.


Next, we develop some intuition about the role of the spectral norm in Proposition~\ref{prop:intro-paving}.  Suppose that $\mtx{A}$ is a standardized matrix with $n$ rows.  If the lower paving bound $\alpha > 0$, then a row paving $T$ of $\mtx{A}$ must contain at least $n / \rank(\mtx{A})$ blocks.  Otherwise, $\mtx{A}_{\tau}$ is rank deficient for some $\tau \in T$ simply because this submatrix has more than $\rank(\mtx{A})$ rows.  Now, using the fact $\fnormsq{\mtx{A}} \leq \rank(\mtx{A}) \normsq{\mtx{A}}$, we easily verify that 
$$
\normsq{\mtx{A}} \geq \frac{n}{\rank(\mtx{A})}.
$$
This bound is sharp.  (Consider the two extreme examples: a matrix with orthonormal rows and a matrix with identical rows.)  Therefore, we can view the squared spectral norm as a proxy for the minimal number of blocks in a row paving whose lower bound $\alpha > 0$.

We conclude that Proposition~\ref{prop:intro-paving} delivers a row paving whose size falls within a logarithmic factor of optimal.  One may wonder whether it is possible to remove the logarithm.  For general matrices, this question remains open. It is known~\cite{And79:Extensions-Restrictions,BHKW88:Matrix-Norm,BT91:Problem-Kadison-Singer} that an affirmative answer would imply the long-standing conjecture of Kadison and Singer~\cite{KS59:Extensions-Pure}.

%


\subsection{Paved with Good Intentions}

We conclude the Introduction by merging our theorem on the convergence of the block Kaczmarz method with the result on the existence of pavings.

\begin{corollary}[Block Kaczmarz with a Good Row Paving] \label{cor:good-intentions}
Suppose that $\mtx{A}$ is a standardized matrix with full column rank.  Let $T$ be a good row paving of $\mtx{A}$, as guaranteed by Proposition~\ref{prop:intro-paving} with $\delta = 1/2$.  Under the notation of Theorem~\ref{thm:convergence}, the block Kaczmarz method, Algorithm~\ref{alg:randomized-block-kaczmarz} admits the convergence estimate
$$
\Expect \normsq{ \vct{x}_j - \vct{x}_{\star} }
	\leq \left[ 1 - \frac{1}{6 \cnst{C_{pave}} \kappa^2(\mtx{A}) \log(1+n)} \right]^j
	\enormsq{ \vct{x}_0 - \vct{x}_\star }
	\ + \
	\frac{3\enormsq{\vct{e}}}{\sigma_{\min}^2(\mtx{A})}
$$
where $\cnst{C_{pave}}$ is the constant from Proposition~\ref{prop:intro-paving}.
\end{corollary}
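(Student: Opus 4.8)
The plan is to obtain the corollary by substituting the paving parameters guaranteed by Proposition~\ref{prop:intro-paving} into the convergence estimate~\eqref{eqn:block-rate} of Theorem~\ref{thm:convergence}; the entire argument is a direct specialization, so no new ideas are needed beyond careful bookkeeping.

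First I would invoke Proposition~\ref{prop:intro-paving} with $\delta = 1/2$. This produces a row paving $T$ of the standardized matrix $\mtx{A}$ whose parameters satisfy
$$
m \leq 4 \cnst{C_{pave}} \normsq{\mtx{A}} \log(1+n),
\qquad
\tfrac{1}{2} \leq \alpha \leq \beta \leq \tfrac{3}{2},
$$
since $\delta^{-2} = 4$ and $1 \pm \delta \in \{1/2, 3/2\}$. Because $\mtx{A}$ also has full column rank, Theorem~\ref{thm:convergence} applies to the block Kaczmarz iterates generated from this paving, and it remains only to simplify the two terms in~\eqref{eqn:block-rate}.

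Next I would control the convergence rate. The product $\beta m$ appearing in the bracket obeys
$$
\beta m \leq \tfrac{3}{2} \cdot 4 \cnst{C_{pave}} \normsq{\mtx{A}} \log(1+n)
= 6 \cnst{C_{pave}} \normsq{\mtx{A}} \log(1+n).
$$
Since $\normsq{\mtx{A}} = \sigma_{\max}^2(\mtx{A})$ and $\kappa^2(\mtx{A}) = \sigma_{\max}^2(\mtx{A}) / \sigma_{\min}^2(\mtx{A})$, dividing $\sigma_{\min}^2(\mtx{A})$ by the displayed upper bound yields
$$
\frac{\sigma_{\min}^2(\mtx{A})}{\beta m}
\geq \frac{1}{6 \cnst{C_{pave}} \kappa^2(\mtx{A}) \log(1+n)},
$$
and subtracting from $1$ reverses the inequality, bounding the rate bracket above by the expression in the corollary. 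For the convergence horizon, the bounds $\beta \leq 3/2$ and $\alpha \geq 1/2$ give $\beta/\alpha \leq 3$, so the second term of~\eqref{eqn:block-rate} is at most $3 \enormsq{\vct{e}} / \sigma_{\min}^2(\mtx{A})$. Combining both estimates and using that $t \mapsto t^j$ is monotone on $[0,1]$ to propagate the bound on the bracket through the $j$th power, I reach the claimed inequality.

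I do not anticipate a genuine obstacle here, since the argument is pure substitution. The only point that requires attention is the direction of the inequalities: one must bound $\beta m$ from \emph{above} in order to bound the subtracted rate term from \emph{below}, which is what bounds the bracket from above; and one must observe that the bracket lies in $[0,1]$ so that raising it to the $j$th power preserves the ordering. With those sign conventions tracked correctly, the corollary follows immediately from Theorem~\ref{thm:convergence} and Proposition~\ref{prop:intro-paving}.
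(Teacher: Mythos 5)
Your proposal is correct and follows exactly the route the paper intends: the corollary is obtained by substituting the $\delta = 1/2$ paving parameters from Proposition~\ref{prop:intro-paving} into the bound~\eqref{eqn:block-rate} of Theorem~\ref{thm:convergence}, with $\beta m \leq \tfrac{3}{2} \cdot 4\cnst{C_{pave}} \normsq{\mtx{A}}\log(1+n)$ giving the factor $6$ via $\kappa^2(\mtx{A}) = \normsq{\mtx{A}}/\sigma_{\min}^2(\mtx{A})$, and $\beta/\alpha \leq 3$ giving the horizon. Your bookkeeping of the inequality directions and the monotonicity of $t \mapsto t^j$ on $[0,1]$ is exactly the care required; nothing is missing.
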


To summarize, Proposition~\ref{prop:intro-paving} yields a small paving of the matrix $\mtx{A}$ with exceptional conditioning.  With this choice of paving, we can perform the block Kaczmarz update~\eqref{eqn:block-kacz} quickly using an iterative least-squares algorithm.  (Indeed, to apply $\mtx{A}_\tau^\pinv$ with a fixed level of precision, it suffices to perform a constant number of matrix--vector multiplies with $\mtx{A}_{\tau}$ and $\mtx{A}_{\tau}^*$.)  Furthermore, we see that the block Kaczmarz method converges linearly with a rate that is controlled by the condition number of the matrix $\mtx{A}$, and the convergence horizon is on the same order as the size of the residual.  This is essentially the best outcome one might hope for.


\subsection{Organization}

The rest of the paper has the following structure.  Section~\ref{sec:results} contains a proof of the main result, Theorem~\ref{thm:convergence}.  In Section~\ref{sec:pavings}, we give an overview of the literature on pavings.  Section~\ref{sec:implementation} discusses numerical aspects of the block Kaczmarz method.  We discuss related work on Kaczmarz methods and future directions in Section~\ref{sec:future}.  Finally, Appendix~\ref{app:fit} offers a proof of a supplemental result.


\section{Analysis of the Randomized Block Kaczmarz Algorithm}
\label{sec:results} 

This section contains the proof of the main result, Theorem~\ref{thm:convergence},
on the convergence of Algorithm~\ref{alg:randomized-block-kaczmarz}.  We commence
with two simple lemmas.  The first step provides a deterministic bound on how
much one iteration of the algorithm reduces the error.


\begin{lemma} \label{lem:err-bd}
Instate the hypotheses and notation of Theorem~\ref{thm:convergence}.
Then the error at iteration $j$ satisfies the deterministic bound
$$
\enormsq{ \vct{x}_{j} - \vct{x}_{\star} }
	\leq \enormsq{ (\Id - \mtx{A}_{\tau}^\pinv \mtx{A}_{\tau})(\vct{x}_{j-1} - \vct{x}_{\star}) }
	\ + \
	\frac{1}{\alpha} \enormsq{ \vct{e}_{\tau} },
$$
where $\tau = \tau(j)$ is the block selected at iteration $j$.
\end{lemma}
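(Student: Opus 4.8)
The plan is to track the error vector $\vct{x}_j - \vct{x}_\star$ directly through the block update~\eqref{eqn:block-kacz} and then split it into two orthogonal pieces. First I would subtract $\vct{x}_\star$ from both sides of the update and use the definition of the residual to write $\vct{b}_\tau = \mtx{A}_\tau \vct{x}_\star - \vct{e}_\tau$. Substituting this into the update and collecting terms should produce the identity
$$
\vct{x}_j - \vct{x}_\star = (\Id - \mtx{A}_\tau^\pinv \mtx{A}_\tau)(\vct{x}_{j-1} - \vct{x}_\star) - \mtx{A}_\tau^\pinv \vct{e}_\tau,
$$
in which the first term carries the contribution of the previous iterate and the second term is the contamination introduced by the residual restricted to the block $\tau$.

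The crux of the argument is the observation that these two terms are orthogonal. Since the lower paving bound $\alpha > 0$ forces each $\mtx{A}_\tau$ to have full row rank, the pseudoinverse obeys $\mtx{A}_\tau^\pinv = \mtx{A}_\tau^*(\mtx{A}_\tau \mtx{A}_\tau^*)^{-1}$, and hence $\mtx{A}_\tau^\pinv \mtx{A}_\tau$ is the orthogonal projector onto $\range(\mtx{A}_\tau^*)$. Consequently the first term lies in the complementary subspace $\Null(\mtx{A}_\tau)$, while the second term $\mtx{A}_\tau^\pinv \vct{e}_\tau$ lies in $\range(\mtx{A}_\tau^*)$. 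Applying the Pythagorean theorem then upgrades the identity above to the exact decomposition
$$
\enormsq{\vct{x}_j - \vct{x}_\star} = \enormsq{(\Id - \mtx{A}_\tau^\pinv \mtx{A}_\tau)(\vct{x}_{j-1} - \vct{x}_\star)} + \enormsq{\mtx{A}_\tau^\pinv \vct{e}_\tau}.
$$

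It remains only to control the second summand. Here I would estimate $\enormsq{\mtx{A}_\tau^\pinv \vct{e}_\tau} \leq \norm{\mtx{A}_\tau^\pinv}^2 \enormsq{\vct{e}_\tau}$ and identify $\norm{\mtx{A}_\tau^\pinv}^2 = \sigma_{\min}^{-2}(\mtx{A}_\tau) = \lambda_{\min}^{-1}(\mtx{A}_\tau \mtx{A}_\tau^*) \leq 1/\alpha$, where the middle equality uses that $\mtx{A}_\tau$ is fat and the final inequality is precisely the lower paving bound. Substituting this estimate yields the claimed deterministic inequality. I do not expect a genuine obstacle: the only subtle point is the orthogonality observation, which rests on recognizing $\mtx{A}_\tau^\pinv \mtx{A}_\tau$ as an orthogonal projector; once that is secured, the remainder is a routine norm bound.
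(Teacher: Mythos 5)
Your proposal is correct and follows essentially the same route as the paper: the same algebraic identity for $\vct{x}_j - \vct{x}_\star$, the same orthogonality observation about $\mtx{A}_\tau^\pinv \mtx{A}_\tau$ leading to the Pythagorean decomposition, and the same operator-norm bound $\sigma_{\min}^{-2}(\mtx{A}_\tau) \leq 1/\alpha$ on the residual term. The extra detail you supply (that $\alpha > 0$ forces full row rank, justifying the explicit pseudoinverse formula) is a correct elaboration of a step the paper leaves implicit.
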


\begin{proof}
According to the update rule~\eqref{eqn:block-kacz}, block Kaczmarz computes
$$
\vct{x}_{j}
	= \vct{x}_{j-1} + \mtx{A}_{\tau}^\pinv (\vct{b}_{\tau} - \mtx{A}_{\tau} \vct{x}_{j-1})
	= \vct{x}_{j-1} + \mtx{A}_{\tau}^\pinv \mtx{A}_{\tau} (\vct{x}_\star - \vct{x}_{j-1})
	- \mtx{A}_{\tau}^\pinv \vct{e}_{\tau},
$$
where we have introduced the decomposition $\vct{b} = \mtx{A} \vct{x}_{\star} - \vct{e}$, restricted to the coordinates listed in $\tau$.
Subtract $\vct{x}_{\star}$ from both sides to obtain
$$
\vct{x}_{j} - \vct{x}_{\star}
	= (\Id - \mtx{A}_{\tau}^\pinv \mtx{A}_{\tau} )(\vct{x}_{j-1} - \vct{x}_\star)
	- \mtx{A}_{\tau}^\pinv \vct{e}_{\tau}.
$$
The range of $\mtx{A}_{\tau}^\pinv$ and the range of $\Id - \mtx{A}_\tau^\pinv \mtx{A}_\tau$ are orthogonal, so we may invoke the Pythagorean Theorem to reach
$$
\enormsq{ \vct{x}_{j} - \vct{x}_{\star} }
	= \enormsq{ (\Id - \mtx{A}_{\tau}^\pinv \mtx{A}_{\tau} )(\vct{x}_{j-1} - \vct{x}_\star) }
	+ \enormsq{ \mtx{A}_{\tau}^\pinv \vct{e}_{\tau} }.
$$
The second term on the right-hand side satisfies
$$
\enormsq{ \mtx{A}_{\tau}^\pinv \vct{e}_{\tau} }
	\leq \sigma_{\max}^2(\mtx{A}_{\tau}^\pinv) \enormsq{\vct{e}_{\tau}}
	\leq \frac{1}{\sigma_{\min}^2(\mtx{A}_{\tau})} \enormsq{\vct{e}_{\tau}}
	\leq \frac{1}{\alpha} \enormsq{\vct{e}_{\tau}},
$$
where $\alpha$ is the lower bound on the row paving $T$.
Combine the last two displays to wrap up.
\end{proof}

The second lemma gives us a means to average the two quantities appearing
in Lemma~\ref{lem:err-bd} over a random choice of the block $\tau = \tau(j)$.

\begin{lemma} \label{lem:project-rdm-block}
Instate the hypotheses and notation of Theorem~\ref{thm:convergence}.
Suppose that $\tau$ is chosen uniformly at random from the row paving $T$.
For fixed vectors $\vct{u}$ and $\vct{v}$, 
it holds that
$$
\Expect \enormsq{ (\Id - \mtx{A}_{\tau}^\pinv \mtx{A}_{\tau}) \vct{u} }
	\leq \left[ 1 - \frac{\sigma^2_{\min}(\mtx{A})}{\beta m} \right] \enormsq{\vct{u}}
\quad\text{and}\quad
\Expect \enormsq{ \vct{v}_{\tau} } = \frac{1}{m} \enormsq{\vct{v}}.
$$
\end{lemma}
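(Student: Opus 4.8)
The plan is to treat the two claims separately, since they rest on different ideas. The second identity is a bookkeeping computation. Because $\tau$ is drawn uniformly from the partition $T = \{\tau_1, \dots, \tau_m\}$, I would write
$$
\Expect \enormsq{ \vct{v}_\tau } = \frac{1}{m} \sum_{k=1}^{m} \enormsq{ \vct{v}_{\tau_k} } = \frac{1}{m} \sum_{k=1}^{m} \sum_{i \in \tau_k} \abssq{v_i} = \frac{1}{m} \enormsq{ \vct{v} }.
$$
The final equality uses the fact that $\tau_1, \dots, \tau_m$ partition the full index set $\{1, \dots, n\}$, so each coordinate of $\vct{v}$ is accounted for exactly once.

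For the operator bound, the key structural observation is that $\mtx{P}_\tau := \mtx{A}_\tau^\pinv \mtx{A}_\tau$ is the orthogonal projector onto the row space $\range(\mtx{A}_\tau^*)$, so $\Id - \mtx{P}_\tau$ is again an orthogonal projector, hence idempotent and self-adjoint. I would exploit this to expand
$$
\enormsq{ (\Id - \mtx{P}_\tau) \vct{u} } = \ip{ (\Id - \mtx{P}_\tau) \vct{u} }{ \vct{u} } = \enormsq{ \vct{u} } - \ip{ \mtx{P}_\tau \vct{u} }{ \vct{u} }.
$$
Taking expectations then reduces everything to the semidefinite lower bound $\Expect \mtx{P}_\tau \succeq \frac{\sigma_{\min}^2(\mtx{A})}{\beta m} \Id$.

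To establish this, I would argue one block at a time. Writing $\mtx{P}_{\tau_k} = \mtx{A}_{\tau_k}^* (\mtx{A}_{\tau_k} \mtx{A}_{\tau_k}^*)^{-1} \mtx{A}_{\tau_k}$ and invoking the upper paving bound $\lambda_{\max}(\mtx{A}_{\tau_k} \mtx{A}_{\tau_k}^*) \leq \beta$, we obtain $(\mtx{A}_{\tau_k} \mtx{A}_{\tau_k}^*)^{-1} \succeq \beta^{-1} \Id$; substituting $\vct{w} = \mtx{A}_{\tau_k} \vct{u}$ into the quadratic form yields $\mtx{P}_{\tau_k} \succeq \beta^{-1} \mtx{A}_{\tau_k}^* \mtx{A}_{\tau_k}$. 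Averaging over the uniform choice of block and using that the partition recovers the full Gram matrix, $\sum_{k=1}^{m} \mtx{A}_{\tau_k}^* \mtx{A}_{\tau_k} = \mtx{A}^* \mtx{A}$, gives
$$
\Expect \mtx{P}_\tau = \frac{1}{m} \sum_{k=1}^{m} \mtx{P}_{\tau_k} \succeq \frac{1}{\beta m} \mtx{A}^* \mtx{A} \succeq \frac{\sigma_{\min}^2(\mtx{A})}{\beta m} \Id,
$$
where the last step uses the full column rank of $\mtx{A}$. Combining this with the expansion above completes the argument.

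The step I expect to demand the most care is the operator inequality $\mtx{P}_{\tau_k} \succeq \beta^{-1} \mtx{A}_{\tau_k}^* \mtx{A}_{\tau_k}$. One must keep track of the fact that only the \emph{upper} paving bound $\beta$ enters here, since the lower bound $\alpha$ already played its role in Lemma~\ref{lem:err-bd}, and one must handle the quadratic-form manipulation attentively, because $\mtx{A}_{\tau_k} \mtx{A}_{\tau_k}^*$ acts on the output space of the block while $\mtx{A}_{\tau_k}^* \mtx{A}_{\tau_k}$ acts on the ambient space. Everything else is routine linear algebra.
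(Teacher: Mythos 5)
Your proposal is correct and follows essentially the same route as the paper: the same partition bookkeeping for the second identity, and for the first bound the same Pythagorean/projector reduction followed by the same use of the upper paving bound $\beta$ and the block decomposition of the Gram matrix $\mtx{A}^*\mtx{A}$. The only difference is cosmetic—you phrase the key inequalities as semidefinite orderings ($\mtx{P}_{\tau_k} \succeq \beta^{-1}\mtx{A}_{\tau_k}^*\mtx{A}_{\tau_k}$, etc.), whereas the paper writes the corresponding scalar quadratic-form inequalities directly.
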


\begin{proof}
The second identity emerges from a very short calculation:
$$
\Expect \enormsq{\vct{v}_\tau}
	= \frac{1}{m} \sum\nolimits_{\omega \in T} \enormsq{\vct{v}_{\omega}}
	= \frac{1}{m} \enormsq{\vct{v}},
$$
which depends on the fact that the blocks of $T$ partition the components of $\vct{v}$.

Since $\mtx{A}_{\tau}^\pinv \mtx{A}_{\tau}$ is an orthogonal projector, we may apply the Pythagorean Theorem to obtain the relation
$$
\Expect \enormsq{ (\Id - \mtx{A}_{\tau}^\pinv \mtx{A}_{\tau}) \vct{u} }
	= \enormsq{\vct{u}} - \Expect \enormsq{ \mtx{A}_{\tau}^\pinv \mtx{A}_{\tau} \vct{u} }.
$$
We control the remaining expectation as follows.
\begin{align*}
\Expect \enormsq{ \mtx{A}_{\tau}^\pinv \mtx{A}_{\tau} \vct{u} }
	&\geq \Expect \left[ \sigma_{\min}^2(\mtx{A}_{\tau}^\pinv) \enormsq{ \mtx{A}_{\tau} \vct{u} } \right]
	\geq \frac{1}{\beta} \cdot \Expect \enormsq{ \mtx{A}_{\tau} \vct{u} } \\
	&= \frac{1}{\beta m} \sum_{\omega \in T} \enormsq{ \mtx{A}_{\omega} \vct{u} }
	= \frac{1}{\beta m} \enormsq{ \mtx{A} \vct{u} }
	\geq \frac{\sigma^2_{\min}(\mtx{A})}{\beta m} \enormsq{\vct{u}}.
\end{align*}
The second inequality depends on the bound $\sigma_{\min}^2(\mtx{A}_{\tau}^\pinv) = \sigma_{\max}^{-2}(\mtx{A}_{\tau}) \geq \beta^{-1}$.  The fourth relation holds because the blocks in a paving partition the row indices of $\mtx{A}$.  To complete the proof, we simply combine the last two displays.
\end{proof}

The main result follows quickly once we merge the two lemmas.

\begin{proof}[Proof of Theorem~\ref{thm:convergence}]
First, we bound the expected error at iteration $j$ in terms of the error at iteration $j - 1$.
Average the bound from Lemma~\ref{lem:err-bd} over the randomness in $\tau = \tau(j)$
to reach
\begin{align*}
\Expect_{\tau} \enormsq{ \vct{x}_{j} - \vct{x}_{\star} }
	&\leq \Expect_{\tau}
	\enormsq{ (\Id - \mtx{A}_{\tau}^\pinv \mtx{A}_{\tau} )(\vct{x}_{j-1} - \vct{x}_\star) }
	+ \frac{1}{\alpha} \cdot \Expect_{\tau} \enormsq{\vct{e}_{\tau}} \\
	&\leq \left[ 1 - \frac{\sigma^2_{\min}(\mtx{A})}{\beta m} \right]
	\enormsq{ \vct{x}_{j-1} - \vct{x}_{\star} }
	+ \frac{1}{\alpha m} \enormsq{\vct{e}}.
\end{align*}
The second inequality follows from Lemma~\ref{lem:project-rdm-block},
with $\vct{u} = \vct{x}_{j-1} - \vct{x}_{\star}$ and $\vct{v} = \vct{e}$.


By applying this result repeatedly, we can control the expected
error after $j$ iterations in terms of the initial error.
Abbreviating $\gamma := 1 - \sigma_{\min}^2(\mtx{A})/(\beta m)$,
we obtain the estimate
\begin{align*}
\Expect \enormsq{ \vct{x}_{j} - \vct{x}_{\star} }
	&= \Expect_{\tau(1)} \Expect_{\tau(2)} \cdots \Expect_{\tau(j)}
	\enormsq{ \vct{x}_{j} - \vct{x}_{\star} } \\
	&\leq \gamma^{j} \enormsq{ \vct{x}_0 - \vct{x}_{\star} }
	+ \frac{1}{\alpha m} \enormsq{\vct{e}}
	\left( \sum\nolimits_{i=0}^{j-1} \gamma^i \right) \\
	&\leq \gamma^{j} \enormsq{ \vct{x}_0 - \vct{x}_{\star} }
	+ \frac{1}{\alpha m(1 - \gamma)} \enormsq{\vct{e}}.
\end{align*}
Reintroduce the value of $\gamma$ in this expression, and simplify to complete the proof.
\end{proof}

%
%
%
%
%
%
%
%

\section{A Conversation about Pavings}\label{sec:pavings} 

As we have seen, the properties of the row paving $T$ of the matrix $\mtx{A}$ have a significant effect on the behavior of the block Kaczmarz method, Algorithm~\ref{alg:randomized-block-kaczmarz}.  It is natural to ask if every standardized matrix $\mtx{A}$ admits a good paving, and---if so---how we can exhibit such a paving.

This section summarizes the main results from the literature on row pavings, with particular attention to algorithmic techniques for constructing pavings.  We focus on two methods in particular.  The first approach, which is more general, extracts a well-conditioned row submatrix from $\mtx{A}$ and repeats this process until the paving is complete.  The second approach, which is more automatic, simply forms a random partition of the rows with an appropriate number of blocks.

For clarity, we only discuss row paving theorems for standardized matrices.  For general matrices, it is often more natural to consider an alternative definition of a paving where the rows of the matrix are reweighted.  For the reader's convenience, we include some citations that address the general case.

\begin{remark}[Paving a Square Matrix]
The operator theory literature uses the term \emph{paving} to refer to a partition $T = \{\tau_1, \dots, \tau_m \}$ of the coordinates of a \emph{square} matrix $\mtx{H}$ with a zero diagonal in which each diagonal block satisfies the bound
$$
\norm{ \mtx{H}_{\tau\tau} } \leq (1 - \delta) \norm{ \mtx{H} }
\quad\text{for each $\tau \in T$,}
$$
where the parameter $\delta \in (0, 1)$.  We can obtain row paving results for a standardized matrix $\mtx{A}$ by applying paving results for a square matrix to the hollow Gram matrix $\mtx{H} = \mtx{AA}^* - \Id$.  This approach generally leads to an estimate for the size $m$ of the paving that has an excessive dependence on the spectral norm of $\mtx{A}$.
\end{remark}

\subsection{Subset Selection Theorems}

The first approach to paving relies on a type of result called a \emph{subset selection theorem}.  This class of result asserts that, under appropriate conditions, a matrix contains a (large) set of rows with distinguished geometric properties.  Proposition~\ref{prop:intro-paving} depends on the following subset selection theorem.


\begin{proposition}[Subset Selection] \label{prop:subset-select}
Fix a number $\delta \in (0, 1)$.  Let $\mtx{A}$ be a standardized matrix with $n$ rows.  Then there exists a subset $\tau$ of row indices with the properties
$$
\abs{\tau} \geq \frac{\cnst{c_{ss}} \cdot \delta^2 n}{\normsq{\mtx{A}}}
\quad\text{and}\quad
1 - \delta \leq \lambda_{\min}(\mtx{A}_\tau \mtx{A}_{\tau}^*)
\leq \lambda_{\max}(\mtx{A}_\tau \mtx{A}_{\tau}^*) \leq 1 + \delta.
$$
The number $\cnst{c_{ss}}$ is a positive, universal constant.
\end{proposition}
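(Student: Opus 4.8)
The plan is to build $\tau$ by \emph{random selection}, after reducing the two-sided eigenvalue requirement to a single spectral estimate for the hollow Gram matrix. Set $\mtx{H} := \mtx{A}\mtx{A}^* - \Id$. Because $\mtx{A}$ is standardized, the diagonal of $\mtx{H}$ vanishes, and since $\normsq{\mtx{A}} \geq n/\rank(\mtx{A}) \geq 1$, the eigenvalues $\sigma_i^2(\mtx{A}) - 1$ of $\mtx{H}$ satisfy $\norm{\mtx{H}} \leq \normsq{\mtx{A}}$. For any index set $\tau$, the principal submatrix $\mtx{H}_{\tau\tau}$ equals $\mtx{A}_\tau\mtx{A}_\tau^* - \Id$, so the condition $1 - \delta \leq \lambda_{\min}(\mtx{A}_\tau\mtx{A}_\tau^*) \leq \lambda_{\max}(\mtx{A}_\tau\mtx{A}_\tau^*) \leq 1 + \delta$ is exactly equivalent to $\norm{\mtx{H}_{\tau\tau}} \leq \delta$. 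The whole problem thus reduces to exhibiting a large coordinate set $\tau$ on which the zero-diagonal matrix $\mtx{H}$ compresses to operator norm at most $\delta$.

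For the construction I would draw independent selectors $\delta_1, \dots, \delta_n$ with $\Prob{\delta_i = 1} = q$, set $\tau := \{ i : \delta_i = 1 \}$, and write $\mtx{R} := \diag(\delta_1, \dots, \delta_n)$ for the associated coordinate projector, so that $\norm{\mtx{R}\mtx{H}\mtx{R}} = \norm{\mtx{H}_{\tau\tau}}$. The target sampling rate is $q \asymp \delta^2 / \normsq{\mtx{A}}$, chosen so that the expected cardinality $\Expect \abs{\tau} = qn \asymp \delta^2 n / \normsq{\mtx{A}}$ matches the claimed lower bound. Controlling the size is the easy half: $\abs{\tau}$ is a sum of independent Bernoulli variables, so a Chernoff bound gives $\abs{\tau} \geq qn/2$ except with exponentially small probability. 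Granting the spectral estimate below, Markov's inequality makes $\norm{\mtx{H}_{\tau\tau}} \leq \delta$ hold with probability bounded away from zero; a union bound over the two events then exhibits a single $\tau$ enjoying both properties, which is all that an existence statement requires, and tuning the implied constants extracts $\cnst{c_{ss}}$.

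The main obstacle is the spectral estimate $\Expect \norm{\mtx{R}\mtx{H}\mtx{R}} \lesssim \delta$. The difficulty is that $\mtx{R}\mtx{H}\mtx{R} = \sum_{i \neq j} \delta_i \delta_j H_{ij}\, \vct{e}_i \vct{e}_j^*$ is a \emph{quadratic} form in the selectors and is not a sum of independent matrices, so matrix Chernoff and matrix Bernstein inequalities do not apply directly. The route I would follow is the one underlying the Bourgain--Tzafriri restricted-invertibility principle: first \emph{decouple} the product $\delta_i \delta_j$ by replacing one factor with an independent copy, reducing to $\mtx{R}\mtx{H}\mtx{R}'$ with $\mtx{R}, \mtx{R}'$ independent; then \emph{symmetrize} and apply a noncommutative moment (matrix Khintchine) inequality to bound the operator norm of the resulting sum. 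The crucial feature is that this concentration contributes a square-root factor, giving a bound of the schematic form $\Expect \norm{\mtx{R}\mtx{H}\mtx{R}} \lesssim q\norm{\mtx{H}} + \sqrt{q}\, R_\infty$, where $R_\infty := \max_i \enorm{\vct{h}_i}$ is the largest row norm of $\mtx{H}$. Since $\enormsq{\vct{h}_i} = \sum_{j \neq i} \abs{\ip{\vct{a}_i}{\vct{a}_j}}^2 = \enormsq{\mtx{A}\vct{a}_i} - 1 \leq \normsq{\mtx{A}} - 1$, we have $R_\infty \leq \norm{\mtx{A}}$; so with $q \asymp \delta^2/\normsq{\mtx{A}}$ both terms are $\lesssim \delta$, the $\sqrt{q}$ term being the binding constraint and the source of the characteristic $\delta^2$ (rather than $\delta$) in the cardinality bound. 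This decoupling-plus-concentration estimate is the technical heart of the statement and is precisely the content distilled from \cite{Ver06:Random-Sets, Tro09:Column-Subset}; the remaining bookkeeping is routine.
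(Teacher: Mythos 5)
Your reduction to the hollow Gram matrix $\mtx{H} := \mtx{AA}^* - \Id$ and the observation that the conclusion is equivalent to $\norm{\mtx{H}_{\tau\tau}} \leq \delta$ are both correct, and the sources the paper cites do begin with a random selection at rate $q \asymp \delta^2/\normsq{\mtx{A}}$. But the technical heart of your argument --- the estimate $\Expect\norm{\mtx{R}\mtx{H}\mtx{R}} \lesssim q\norm{\mtx{H}} + \sqrt{q}\,\max_i\enorm{\vct{h}_i}$ --- is false, and no amount of decoupling and matrix Khintchine will produce it. Test it on the matrix whose $n$ rows consist of $n/2$ mutually orthogonal pairs of \emph{identical} unit vectors: then $\normsq{\mtx{A}} = 2$ and $\norm{\mtx{H}} = \max_i\enorm{\vct{h}_i} = 1$, so your bound promises $\Expect\norm{\mtx{R}\mtx{H}\mtx{R}} \lesssim q + \sqrt{q} \lesssim \delta$ at the rate $q \asymp \delta^2$. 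Yet a random set of that (proportional) cardinality contains both members of some pair with overwhelming probability as soon as $q^2 n \gg 1$, which forces $\norm{\mtx{H}_{\tau\tau}} = 1$ and $\lambda_{\min}(\mtx{A}_\tau\mtx{A}_\tau^*) = 0$. The correct form of the random-compression estimate (Rudelson--Vershynin; \cite{Tro08:Norms-Random}) carries additional terms of the shape $\sqrt{q\log n}\,\max_i\enorm{\vct{h}_i} + \log n\cdot\max_{i\neq \ell}\abs{\ip{\vct{a}_i}{\vct{a}_\ell}}$; the last term is exactly what blows up in this example, and it is why the paper's random-partition result, Proposition~\ref{prop:randpave}, requires an incoherence hypothesis and loses a logarithmic factor. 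Proposition~\ref{prop:subset-select} is claimed for \emph{every} standardized matrix with \emph{no} logarithmic factor, so random selection alone cannot prove it.

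The missing idea is the second stage of the Bourgain--Tzafriri/Vershynin/Tropp argument: after drawing a random set $\omega$ of roughly twice the target cardinality (for which only an upper spectral bound, or a weaker ``spreading'' property, is established), one applies a deterministic matrix factorization --- a Grothendieck/Pietsch-type factorization in \cite{BT87:Invertibility-Large,BT91:Problem-Kadison-Singer,Ver06:Random-Sets}, or the argument of \cite[Thm.~1.2]{Tro09:Column-Subset} --- to extract a further subset $\tau \subset \omega$ of comparable size on which the lower bound $\lambda_{\min}(\mtx{A}_\tau\mtx{A}_\tau^*) \geq 1-\delta$ actually holds. That extraction step is precisely what discards one row from each nearly-parallel pair in the example above, and it is where the two-sided conclusion comes from; your sketch alludes to the restricted invertibility principle but omits this step entirely. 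For the record, the paper does not reprove the proposition either: it invokes \cite[Thm.~1.2]{Tro09:Column-Subset} and \cite[Cor.~1.5]{Ver06:Random-Sets} and explicitly describes the underlying algorithm as random selection of a set of twice the desired cardinality \emph{followed by} a matrix factorization that exposes a well-conditioned subset inside it.
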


Proposition~\ref{prop:subset-select} follows from~\cite[Thm.~1.2]{Tro09:Column-Subset}, once we track the parameter $\delta$ through the proof.  This result has been attributed to Bourgain and Tzafriri~\cite{BT91:Problem-Kadison-Singer}, but the earliest reference seems to be Vershynin's paper~\cite[Cor.~1.5]{Ver06:Random-Sets}.  See Section~\ref{sec:ss-related} for further background.

Proposition~\ref{prop:subset-select} ensures that each standardized matrix contains a large set of well-conditioned rows.  To construct a paving of a standardized matrix $\mtx{A}$ with row indices $\{1, \dots, n\}$, we apply this result to identify a large subset $\tau_1$ of the row indices.  We apply the same result to the set of remaining rows $\{1, \dots, n\} \setminus \tau_1$ to bite off another subset $\tau_2$, and so forth.  After
$$
m \leq \cnst{C_{pave}} \cdot \delta^{-2} \normsq{\mtx{A}} \log(1+n)
$$
steps, we have exhausted the entire matrix.  This argument yields Proposition~\ref{prop:intro-paving}.

The paper~\cite{Tro09:Column-Subset} contains an efficient computational method for identifying the subset $\tau$ promised by Proposition~\ref{prop:subset-select}.   This algorithm chooses a random set $\omega$ of rows from the matrix $\mtx{A}$ with twice the cardinality of the desired subset $\tau$.  Then it computes a matrix factorization of the submatrix $\mtx{A}_{\omega}$ that exposes a well-conditioned subset $\tau$ of rows inside $\omega$.

\subsubsection{Related Results} \label{sec:ss-related}

The literature contains a variety of subset selection theorems that can be used to construct pavings.  The first major result in this direction is the Restricted Invertibility Principle of Bourgain and Tzafriri~\cite[Thm.~1.2]{BT87:Invertibility-Large}, which guarantees that every standardized matrix contains a set of $\cnst{const} / \norm{\mtx{A}}^2$ rows whose minimal singular value is bounded away from zero.  In a similar vein, Kashin and Tzafriri~\cite{KT94:Some-Remarks} establish that every standardized matrix contains a set of $\cnst{const} / \norm{\mtx{A}}^{2}$ rows whose norm is constant.  Both results are based on random selection combined with matrix factorization.  Neither result yields a submatrix whose singular values lie arbitrarily close to one.

In another notable paper, Bourgain and Tzafriri~\cite[Cor.~1.2]{BT91:Problem-Kadison-Singer} prove that every standardized matrix contains a set of $\cnst{const} \cdot \delta^{2} /\norm{\mtx{A}}^{4}$ rows whose squared singular values fall in the range $[1 - \delta, 1 + \delta]$.  This theorem offers quantitative control on the conditioning of the submatrix, but it gives the wrong dependence on the spectral norm of the matrix.  Proposition~\ref{prop:subset-select} is based on the same type of argument as this result of Bourgain and Tzafriri.  The iterative argument we use to draw the paving result, Proposition~\ref{prop:intro-paving}, as a corollary of Proposition~\ref{prop:subset-select} also appears in the paper~\cite{BT91:Problem-Kadison-Singer}.

 

The last few years have witnessed some striking advances in this area.  Indeed, Spielman and Srivastava~\cite{SS12:Elementary-Proof} have recently invented an elementary proof of the Restricted Invertibility Principle.  Their method only involves linear algebra, and it leads to sharp constants.  Youssef~\cite[Thm.~4.2]{You12:Restricted-Invertibility} has adapted these ideas to obtain an elementary proof of the Kashin--Tzafriri theorem~\cite{KT94:Some-Remarks}.  These results are appealing because they construct the required subsets using an algorithmic procedure that admits a polynomial-time implementation.  See~\cite{Sri10:Spectral-Sparsification,Nao11:Sparse-Quadratic} for further exposition.

Vershynin~\cite{Ver01:Johns-Decompositions} has obtained a theory of subset selection for matrices that are not necessarily standardized.  In his results, the squared Frobenius norm $\fnormsq{\mtx{A}}$ plays the role of the number $n$ of rows.  Srivastava's dissertation~\cite[Chap.~3]{Sri10:Spectral-Sparsification} contains an algorithm for (weighted) subset selection that applies to general matrices.  See Youssef's works~\cite{You12:Restricted-Invertibility,You12:Note-Column} for the latest developments in this direction.

Finally, let us mention that subset selection theorems have applications throughout mathematics and engineering.  See the paper~\cite{CT06:Kadison-Singer-Problem} for a discussion and references.

\subsection{Randomized Methods for Paving}

The second approach to paving has the benefit of utmost simplicity, but it is more limited in scope.  The idea is to divide the rows of the matrix into random blocks of approximately equal size.  Under additional assumptions, each submatrix induced by this partition is likely to be well conditioned.  To describe this idea in more detail, we first introduce the concept of a random partition.

\begin{definition}[Random Partition]\label{def:rp}
Suppose that $\pi$ is a permutation on $\{1, 2, \ldots, n\}$, chosen uniformly at random.  For each $i = 1, 2, \ldots, m$, define the set
$$
\tau_i = \{\pi(k) : k = \lfloor (i-1)n/m\rfloor + 1, \lfloor (i-1)n/m\rfloor + 2, \ldots, \lfloor in/m\rfloor\}.
$$
It is clear that $T = \{\tau_1, \tau_2, \ldots, \tau_{m}\}$ is a partition of $\{1, 2, \ldots, n\}$ into $m$ blocks of approximately equal size.  We say that $T$ is a \textit{random partition} of $\{1, 2, \dots, n\}$ into $m$ blocks.
\end{definition}

For every standardized matrix, we can use a random partition to construct a paving whose upper bound $\beta$ is relatively small.

\begin{proposition}[Random Paving: Upper Bound] \label{prop:randpave-upper}
Let $\mtx{A}$ be a tall\,%
\footnote{A $p \times q$ matrix is \emph{tall} when $p \geq q$.},
standardized matrix with $n$ rows.  Consider a randomized partition $T$ of the row indices with $m \geq \normsq{\mtx{A}}$ blocks.  Then $T$ is a row paving with upper bound $\beta \leq 6\log(1 + n)$, with probability at least $1-n^{-1}$.
\end{proposition}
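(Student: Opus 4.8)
The plan is to bound the top eigenvalue of each block individually with a matrix Chernoff inequality and then to pay a union bound over the $m$ blocks. Fix one block $\tau \in T$. Since the nonzero eigenvalues of $\mtx{A}_\tau \mtx{A}_\tau^*$ and $\mtx{A}_\tau^* \mtx{A}_\tau$ coincide, we have $\lambda_{\max}(\mtx{A}_\tau \mtx{A}_\tau^*) = \lambda_{\max}\bigl(\sum_{k \in \tau} \vct{a}_k \vct{a}_k^*\bigr)$, a sum of rank-one positive-semidefinite matrices indexed by the rows in the block. Because $T$ is generated by a uniformly random permutation, each block $\tau$ is, marginally, a uniformly random subset of $\{1,\dots,n\}$ of size $\abs{\tau} \le \lceil n/m \rceil$ drawn without replacement; this is all I need, since the union bound ignores the dependence among the blocks.

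Next I would record the two ingredients that drive the matrix Chernoff bound. Taking the expectation over the random block, $\E \sum_{k \in \tau} \vct{a}_k \vct{a}_k^* = \frac{\abs{\tau}}{n}\, \mtx{A}^* \mtx{A}$, so the mean parameter is $\mu_{\max} = \frac{\abs{\tau}}{n}\normsq{\mtx{A}} \le \bigl(\tfrac1m + \tfrac1n\bigr)\normsq{\mtx{A}} \le 2$, where I used the hypothesis $m \ge \normsq{\mtx{A}}$ together with $\normsq{\mtx{A}} \le \fnormsq{\mtx{A}} = n$ for a standardized matrix. Each summand is positive semidefinite with $\lambda_{\max}(\vct{a}_k \vct{a}_k^*) = \enormsq{\vct{a}_k} = 1$ by standardization, so the per-term bound is $R = 1$.

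With these parameters in hand, I would apply the matrix Chernoff upper-tail inequality. Sampling without replacement is handled either by a without-replacement version of the bound or, more simply, by the domination principle of Gross and Nesme, which lets one substitute the with-replacement inequality. The ambient dimension is $d \le n$, since $\mtx{A}$ is tall, so the inequality produces a tail of the shape $\Prob{ \lambda_{\max}\bigl(\sum_{k\in\tau}\vct{a}_k\vct{a}_k^*\bigr) \ge t } \le d \cdot \exp\bigl(-\bigl(t \log(t/\mu_{\max}) - t + \mu_{\max}\bigr)\bigr)$. Choosing $t = 6\log(1+n)$, the super-linear growth of $t \log t$ dominates the linear terms and the dimensional factor $\log d \le \log(1+n)$, which drives the single-block failure probability below $(nm)^{-1}$. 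A union bound over the $m$ blocks then caps the total failure probability by $n^{-1}$, and on the complementary event every block obeys $\lambda_{\max}(\mtx{A}_\tau\mtx{A}_\tau^*) \le 6\log(1+n)$, that is, $\beta \le 6\log(1+n)$.

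The delicate step is the concentration estimate. I expect two points to require care: correctly reducing the without-replacement sampling to a setting where the matrix Chernoff bound applies, and tracking the constants so that the clean threshold $6\log(1+n)$---with the $+1$ that keeps the bound meaningful for small $n$---emerges, rather than a bound phrased in terms of $\log d$ or an unspecified constant.
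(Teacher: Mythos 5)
Your proposal follows exactly the route the paper indicates for this result: the paper states only that Proposition~\ref{prop:randpave-upper} ``results from an argument based on the matrix Chernoff inequality~\cite[Thm.~1.1]{Tro12:User-Friendly} and a union bound,'' with the without-replacement issue handled as in~\cite{Tro11:Improved-Analysis} (i.e., the Gross--Nesme domination you invoke), and it omits all further details. Your computation of the mean parameter $\mu_{\max} \leq 2$ and the per-term bound $R = 1$ is correct, and you have correctly identified the two points (without-replacement sampling and constant tracking) on which a fully detailed write-up would turn.
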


Proposition~\ref{prop:randpave-upper} results from an argument based on the matrix Chernoff inequality~\cite[Thm.~1.1]{Tro12:User-Friendly} and a union bound.  A model for this type of proof appears in the paper~\cite{Tro11:Improved-Analysis}.  We omit the details.

In contrast, if we wish to construct a paving with a nontrivial lower bound $\alpha$, we must place additional assumptions on the matrix.  An example~\cite[Ex.~2.2]{BT91:Problem-Kadison-Singer} of Bourgain and Tzafriri implies that we must assume the rows of the matrix $\mtx{A}$ are weakly correlated to obtain a random paving with $\alpha > 0$.  It is natural to carve out a class of matrices that meet this requirement.

\begin{definition}[Incoherence] \label{def:incoherence}
Suppose $\mtx{A}$ is a matrix with rows $\vct{a}_1, \vct{a}_2, \ldots, \vct{a}_n$.  We say that $\mtx{A}$ is \emph{incoherent} when
$$
\max_{i \neq \ell} \ \abs{\ip{ \vct{a}_i }{ \vct{a}_\ell }} \leq\frac{\cnst{c_{inc}}}{\log(1+n)}.
$$
The number $\cnst{c_{inc}}$ is a positive, universal constant.
\end{definition}

Incoherent matrices arise, for example, in signal processing problems~\cite{DH00:Uncertainty-Principles}.  Every incoherent, standardized matrix admits a random paving with controlled lower and upper bounds.

\begin{proposition}[Random Paving]\label{prop:randpave}
Suppose that $\mtx{A}$ is an incoherent, standardized matrix with $n$ rows.  Let $T$ be a random partition of the row indices into $m$ blocks where $m \geq \cnst{C_{rand}} \cdot \delta^{-2} \normsq{\mtx{A}} \log(1+n)$.  Then $T$ is a row paving of $\mtx{A}$ whose paving bounds satisfy $1-\delta \leq \alpha \leq \beta \leq 1 + \delta$, with probability at least $1-n^{-1}$.
\end{proposition}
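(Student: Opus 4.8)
The plan is to reduce the assertion to a spectral-norm bound on a random principal submatrix of the hollow Gram matrix $\mtx{H} := \mtx{AA}^* - \Id$. Since $\mtx{A}$ is standardized, the diagonal of $\mtx{A}_\tau \mtx{A}_\tau^*$ consists of ones, so $\mtx{A}_\tau \mtx{A}_\tau^* = \Id + \mtx{H}_{\tau\tau}$, where $\mtx{H}_{\tau\tau}$ is the principal submatrix of $\mtx{H}$ indexed by the block $\tau$. Because $\mtx{H}_{\tau\tau}$ is Hermitian, the paving bounds $1-\delta \le \lambda_{\min}(\mtx{A}_\tau \mtx{A}_\tau^*)$ and $\lambda_{\max}(\mtx{A}_\tau \mtx{A}_\tau^*) \le 1+\delta$ hold for $\tau$ if and only if $\norm{\mtx{H}_{\tau\tau}} \le \delta$. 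It therefore suffices to show that $\norm{\mtx{H}_{\tau\tau}} \le \delta$ holds simultaneously for all $m$ blocks, outside an event of probability at most $n^{-1}$. Observe that incoherence is exactly the statement that the entries of $\mtx{H}$ are uniformly small, $\max_{i \neq \ell} \abs{H_{i\ell}} \le \cnst{c_{inc}}/\log(1+n)$, while the full hollow Gram matrix obeys $\norm{\mtx{H}} \lesssim \normsq{\mtx{A}}$.

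Next I would fix one block $\tau$ and bound $\norm{\mtx{H}_{\tau\tau}}$ in probability. Introduce the selectors $\eta_i := \mathbf{1}[i \in \tau]$, so that, after zero-padding to the ambient dimension, $\mtx{H}_{\tau\tau}$ is represented by $\mtx{D}_\eta \mtx{H} \mtx{D}_\eta$ with $\mtx{D}_\eta := \diag(\eta_1, \dots, \eta_n)$; for a random partition $\Prob{i \in \tau} = \abs{\tau}/n =: p \approx 1/m$. Because $\mtx{D}_\eta \mtx{H} \mtx{D}_\eta$ is a quadratic form in the selectors, I would strip the quadratic dependence with a decoupling inequality, comparing it to $\mtx{D}_\eta \mtx{H} \mtx{D}_{\eta'}$ for an independent copy $\eta'$, and pass from the partition model (sampling without replacement) to independent Bernoulli selectors by a negative-association comparison. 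A matrix concentration inequality then controls the spectral norm: the noncommutative Khintchine inequality handles the expectation, while the matrix Bernstein/Chernoff bound \cite{Tro12:User-Friendly}, used in the manner of \cite{Tro11:Improved-Analysis}, handles the tail. The estimate splits into a mean term of order $p \norm{\mtx{H}} \lesssim \normsq{\mtx{A}}/m$ and a fluctuation term whose variance parameter is driven by the largest entry and column norm of $\mtx{H}$.

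The role of incoherence is to push the fluctuation term below $\delta$. The bound $\cnst{c_{inc}}/\log(1+n)$ on the entries supplies a small variance parameter, and the factor $\log(1+n)$ it carries is precisely what cancels the logarithm produced by the ambient dimension in the matrix concentration tail. Substituting $m \ge \cnst{C_{rand}} \cdot \delta^{-2} \normsq{\mtx{A}} \log(1+n)$ into both the mean and the fluctuation terms yields $\norm{\mtx{H}_{\tau\tau}} \le \delta$ for the fixed block, with failure probability at most (roughly) $n^{-2}$. A union bound over the $m \le n$ blocks of $T$ then gives the overall failure probability $n^{-1}$, which completes the proof.

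The crux is the single-block spectral estimate. The upper-bound-only result, Proposition~\ref{prop:randpave-upper}, follows from the matrix Chernoff inequality applied to the sum of rank-one matrices $\sum_{i \in \tau} \vct{a}_i \vct{a}_i^*$, because $\lambda_{\max}(\mtx{A}_\tau \mtx{A}_\tau^*) = \lambda_{\max}(\mtx{A}_\tau^* \mtx{A}_\tau)$. The two-sided bound near $1$ cannot be recovered this way, since the $d \times d$ matrix $\mtx{A}_\tau^* \mtx{A}_\tau$ has smallest eigenvalue $0$ whenever $\abs{\tau} < d$. We are therefore forced to concentrate the $\abs{\tau} \times \abs{\tau}$ hollow chaos $\mtx{H}_{\tau\tau}$ directly, and the delicate points are the decoupling step together with the careful bookkeeping that balances the incoherence parameter against the dimensional logarithm, so that the per-block failure probability is small enough to survive the union bound.
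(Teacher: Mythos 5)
The paper offers no proof of Proposition~\ref{prop:randpave}; it defers entirely to \cite[Cor.~5.2]{Tro08:Norms-Random} plus ``standard arguments,'' and your sketch reconstructs precisely that route: reduce to the spectral norm of the random principal submatrix $\mtx{H}_{\tau\tau}$ of the hollow Gram matrix $\mtx{AA}^* - \Id$, decouple the selector chaos, transfer from the partition model to independent selectors, and apply matrix concentration with the incoherence supplying the small entrywise bound that absorbs the dimensional logarithm, finishing with a union bound over the $m \leq n$ blocks. This is essentially the same approach, and your closing observation---that the matrix Chernoff argument behind Proposition~\ref{prop:randpave-upper} cannot produce the lower paving bound because $\mtx{A}_\tau^* \mtx{A}_\tau$ is singular when $\abs{\tau} < d$---is exactly the reason the hollow Gram matrix is unavoidable here.
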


Proposition~\ref{prop:randpave} follows from~\cite[Cor.~5.2]{Tro08:Norms-Random}, along with some standard arguments~\cite{BT91:Problem-Kadison-Singer,Tro08:Random-Paving}.  The paper~\cite{CD12:Invertibility-Submatrices} contains superior estimates for the constants in this analysis.  See Section~\ref{sec:randpave-related} for some further background.

Random paving is a striking idea because it is almost completely automatic.  Given a guarantee that the matrix $\mtx{A}$ is incoherent and an estimate for the spectral norm, we can obtain a good paving of the matrix without any further computation.


\subsubsection{The Fast Incoherence Transform}

Prima facie, random paving has limited applicability because the incoherence hypothesis in Proposition~\ref{prop:randpave} is rather stringent.  Fortunately, there is a fast linear transformation that can be used to convert any standardized matrix into an incoherent matrix that is nearly standardized.

\begin{definition}[Fast Incoherence Transform]
The \emph{fast incoherence transform} is the $n \times n$ random matrix $\mtx{S} = \mathbf{F}\mtx{E}$ where $\mathbf{F}$ is the $n \times n$ unitary discrete Fourier transform (DFT) and $\mtx{E}$ is an $n \times n$ diagonal matrix whose entries are independent Rademacher%
\footnote{A \emph{Rademacher} random variable takes the values $\pm 1$ with equal probability.}
random variables.
\end{definition}

The matrix $\mtx{S}$ is unitary, so it preserves Euclidean structure in a problem.  Furthermore, $\mtx{S}$ can be applied to a vector in $\bigO(n \log n)$ arithmetic operations by means of the FFT algorithm.  Thus, for an $n \times d$ matrix $\mtx{A}$, we can form the product $\mtx{SA}$ with $\bigO(nd \log n)$ arithmetic operations.  When the matrix $\mtx{A}$ is sparse or admits a fast matrix--vector multiply, it may be more appropriate to work directly with the factorized representation $\mtx{SA}$, because it inherits a fast multiply from its two factors.  To emphasize, in many contexts, it is unnecessary to form $\mtx{SA}$ explicitly.

The critical fact is that the fast incoherence transform converts a large class of standardized matrices into incoherent matrices that are nearly standardized.

\begin{proposition}[Fast Incoherence Transform] \label{prop:fit}
Suppose that $\mtx{A}$ is a standardized matrix with $n$ rows whose norm satisfies
\begin{equation} \label{eqn:fit-hyp}
\normsq{\mtx{A}} \leq \frac{\cnst{c_{fit}} \cdot n}{\log^3(1+n)}.
\end{equation}
Let $\mtx{S}$ be an $n \times n$ fast incoherence transform.  Then the matrix $\mtx{W} = \mtx{SA}$ satisfies the probability bound
$$
\Prob{ \max_{i, \ell}\abs{ \ip{\vct{w}_i}{ \vct{w}_\ell} - \delta_{i\ell}} \geq \frac{\cnst{c_{inc}}}{\log(1+ n)} }
	\leq \frac{1}{n},
$$
where $\vct{w}_i$ is the $i$th row of $\mtx{W}$ and $\delta_{i\ell}$ is the Kronecker delta.
\end{proposition}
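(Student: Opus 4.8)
The plan is to recognize the Gram matrix of $\mtx{W}$ as a small random perturbation of the identity and then control each of its entries with a concentration inequality for Rademacher chaos, closing with a union bound over the $n^2$ pairs of rows.

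First I would record the algebraic identity that drives everything. Write $\mtx{E} = \diag(\varepsilon_1,\dots,\varepsilon_n)$ for the Rademacher signs and $\mtx{H} := \mtx{A}\mtx{A}^* - \Id$ for the hollow Gram matrix of $\mtx{A}$; because $\mtx{A}$ is standardized, $\mtx{H}$ has a zero diagonal. Since $\mtx{E}^2 = \Id$ and $\mathbf{F}$ is unitary, a direct computation gives
\[
\mtx{W}\mtx{W}^* - \Id = \mathbf{F}\mtx{E}(\mtx{A}\mtx{A}^*)\mtx{E}\mathbf{F}^* - \Id = \mathbf{F}\mtx{E}\,\mtx{H}\,\mtx{E}\mathbf{F}^*.
\]
The quantity to be bounded is exactly the $(i,\ell)$ entry of the left-hand side, because $\ip{\vct{w}_i}{\vct{w}_\ell} = (\mtx{W}\mtx{W}^*)_{i\ell}$.

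Next I would expand a single entry as a quadratic form in the signs. Writing $F_{ip}$ for the entries of $\mathbf{F}$, so that $\abs{F_{ip}} = n^{-1/2}$, the hollow structure of $\mtx{H}$ yields
\[
(\mathbf{F}\mtx{E}\mtx{H}\mtx{E}\mathbf{F}^*)_{i\ell} = \sum_{p\neq q} F_{ip}\,\overline{F_{\ell q}}\,H_{pq}\,\varepsilon_p\varepsilon_q = \vct{\varepsilon}^{\top}\mtx{M}^{(i\ell)}\vct{\varepsilon},
\]
where $\mtx{M}^{(i\ell)} = \mtx{D}_i\,\mtx{H}\,\mtx{D}_\ell^*$ with $\mtx{D}_i = \diag(F_{i1},\dots,F_{in})$, and $\mtx{M}^{(i\ell)}$ inherits a zero diagonal. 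This is a mean-zero Rademacher chaos of order two, whose fluctuations are governed by the Frobenius and operator norms of $\mtx{M}^{(i\ell)}$. Because each $\mtx{D}_i$ is $n^{-1/2}$ times a diagonal unitary,
\[
\fnormsq{\mtx{M}^{(i\ell)}} = \frac{1}{n^2}\fnormsq{\mtx{H}}
\quad\text{and}\quad
\norm{\mtx{M}^{(i\ell)}} \leq \frac{1}{n}\norm{\mtx{H}}.
\]
The estimates on $\mtx{H}$ follow from standardization: $\fnormsq{\mtx{H}} = \fnormsq{\mtx{A}\mtx{A}^*} - n = \sum_i \sigma_i^4(\mtx{A}) - n \leq \normsq{\mtx{A}}\,\fnormsq{\mtx{A}} - n \leq n\normsq{\mtx{A}}$, while the eigenvalues $\sigma_i^2(\mtx{A}) - 1$ and $-1$ of $\mtx{H}$ give $\norm{\mtx{H}} \leq \normsq{\mtx{A}}$ (using $\normsq{\mtx{A}}\geq 1$). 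Hence both $\fnormsq{\mtx{M}^{(i\ell)}}$ and $\norm{\mtx{M}^{(i\ell)}}$ are at most $\normsq{\mtx{A}}/n$.

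With the norm bounds in hand, I would invoke the Hanson--Wright inequality for Rademacher chaos: for a fixed pair $(i,\ell)$ and a threshold $t$,
\[
\Prob{ \abs{\vct{\varepsilon}^{\top}\mtx{M}^{(i\ell)}\vct{\varepsilon}} \geq t }
\leq 2\exp\!\left( -\cnst{c}\min\Bigl\{ \tfrac{t^2}{\fnormsq{\mtx{M}^{(i\ell)}}},\ \tfrac{t}{\norm{\mtx{M}^{(i\ell)}}}\Bigr\}\right),
\]
applied to the real and imaginary parts separately, since each carries the same norm bounds. Taking $t \asymp \cnst{c_{inc}}/\log(1+n)$ and substituting the hypothesis $\normsq{\mtx{A}} \leq \cnst{c_{fit}}\,n/\log^3(1+n)$, the sub-Gaussian exponent becomes $t^2/\fnormsq{\mtx{M}^{(i\ell)}} \gtrsim (\cnst{c_{inc}}^2/\cnst{c_{fit}})\log(1+n)$, while the sub-exponential exponent $t/\norm{\mtx{M}^{(i\ell)}}$ is even larger, of order $\log^2(1+n)$. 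Thus each entry exceeds the threshold with probability at most $(1+n)^{-K}$, where $K$ can be made as large as desired by choosing the universal constant $\cnst{c_{fit}}$ small relative to $\cnst{c_{inc}}$. A union bound over the at most $n^2$ pairs $(i,\ell)$, including the diagonal terms $\ip{\vct{w}_i}{\vct{w}_i} - 1$, then forces the stated failure probability below $1/n$.

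The main obstacle is the chaos concentration step. The coefficients $F_{ip}\overline{F_{\ell q}}H_{pq}$ are complex, so one must split into real and imaginary parts (or appeal to a complex form of Hanson--Wright), and one must confirm that the interplay between the Frobenius and operator norms actually places the governing exponent at order $\log(1+n)$ rather than a smaller power. The norm computations for $\mtx{M}^{(i\ell)}$ and $\mtx{H}$ are routine, but matching the powers of $\log(1+n)$ against the hypothesis $\normsq{\mtx{A}}\lesssim n/\log^3(1+n)$ is precisely what pins the exponent on the logarithm and dictates the admissible range of $\normsq{\mtx{A}}$.
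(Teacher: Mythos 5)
Your proposal is correct and follows essentially the same route as the paper's Appendix~\ref{app:fit}: expand the entries of $\mtx{W}\mtx{W}^*-\Id$ as a second-order Rademacher chaos with coefficient matrix $\diag(\mathbf{f}_i)(\mtx{A}\mtx{A}^*-\Id)\diag(\mathbf{f}_\ell)^*$, bound its spectral and Frobenius norms by $\normsq{\mtx{A}}/n$, apply Hanson--Wright at level $t=\cnst{c_{inc}}/\log(1+n)$, and finish with a union bound over the row pairs. The only cosmetic difference is that the paper symmetrizes the coefficient matrix into a Hermitian $\mtx{Y}$ so the chaos is real, whereas you split into real and imaginary parts; both are fine.
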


We do not have a reference for Proposition~\ref{prop:fit}, but the result is probably not new.  Appendix~\ref{app:fit} offers a short proof based on the Hanson--Wright inequality~\cite{HW71:Bound-Tail} for Rademacher chaos.

Let us pause to examine the hypothesis~\eqref{eqn:fit-hyp}.  Recall that, for a standardized matrix $\mtx{A}$ with $n$ rows, the squared spectral norm $\normsq{\mtx{A}}$ attains its maximal value $n$ when the rows are identical.  Therefore, the bound~\eqref{eqn:fit-hyp} stipulates that the rows of $\mtx{A}$ must exhibit a small amount of diversity.  In this case, the fast incoherence transform spreads out the diversity evenly so that no pair of columns is correlated too strongly.

\subsubsection{Incoherence, Paving, Kaczmarz} \label{sec:inc-pave-kacz}

This discussion suggests the following approach for solving the overdetermined least-squares problem~\eqref{eqn:basic-ls} with a standardized matrix $\mtx{A}$:
\begin{enumerate}
\item	Apply the fast incoherence transform $\mtx{S}$ to the objective function:
$$
\enormsq{ \mtx{A} \vct{x} - \vct{b} }
	= \enormsq{ \mtx{SA} \vct{x} - \mtx{S}\vct{b} }
	=: \enormsq{ \tilde{\mtx{A}} \vct{x} - \tilde{\vct{b}} }.
$$
\item	Draw a random partition $T$ of the rows of $\tilde{\mtx{A}}$ with the number of blocks determined according to Proposition~\ref{prop:randpave}.
\item	Apply the randomized block Kaczmarz method, Algorithm~\ref{alg:randomized-block-kaczmarz}, with the matrix $\tilde{\mtx{A}}$, the right-hand side~$\tilde{\vct{b}}$, and the paving $T$ to solve the transformed problem.
\end{enumerate}
Provided that the hypothesis~\eqref{eqn:fit-hyp} is in force, Proposition~\ref{prop:fit} shows that the fast incoherence transform is likely to convert the matrix $\mtx{A}$ into an incoherent matrix $\tilde{\mtx{A}}$.  Proposition~\ref{prop:randpave} shows that we can obtain a good paving of $\tilde{\mtx{A}}$ from a random partition of the rows.  If these randomized procedures are successful, when we apply the block Kaczmarz algorithm to solve the transformed least-squares problem, we obtain the same convergence guarantees outlined in Corollary~\ref{cor:good-intentions}.

For a well-conditioned $n \times d$ matrix $\mtx{A}$ with $d \gg \log n$, we can use this approach to solve the least-squares problem to a fixed level of precision after $\bigO(nd \log n)$ arithmetic operations.  In theory, this bound is almost comparable with the conjugate gradient method, which requires $\bigO(nd)$ operations to achieve fixed precision in this setting.

\subsubsection{Related Results} \label{sec:randpave-related}

The idea that randomness might help us to construct a paving is already inherent in the Bourgain--Tzafriri paper~\cite{BT87:Invertibility-Large} on the Restricted Invertibility Principle, where they use randomized row selection and matrix factorization to perform subset selection.  A result~\cite[Thm.~2.3]{BT91:Problem-Kadison-Singer} in their subsequent paper demonstrates that, in the presence of incoherence assumptions, a random partition induces a row paving with $\cnst{const} \cdot \norm{\mtx{A}}^4 \log(1+n)$ blocks; the extra factorization step is not necessary.  Furthermore, under an incoherence assumption slightly stricter than Definition~\ref{def:incoherence}, they prove that a random partition induces a paving with $\cnst{const} \cdot \norm{\mtx{A}}^4$ blocks; no logarithmic factor is necessary.  See the paper~\cite{Tro08:Random-Paving} for a modern proof of the latter result.

The aforementioned theorems all yield the wrong dependence on the spectral norm in the size of a random row paving.  The pr{\'e}cis~\cite{Tro08:Norms-Random} shows how to obtain the correct quadratic dependence that is quoted in Proposition~\ref{prop:randpave}.  If we were to strengthen the incoherence requirement in Proposition~\ref{prop:randpave}, it is likely that we could remove the logarithmic factor from the size of the paving by adapting an argument~\cite[Prop.~2.7]{BT91:Problem-Kadison-Singer} of Bourgain and Tzafriri.  On the other hand, the logarithmic factor is necessary at the incoherence level we have imposed~\cite[Ex.~2.2]{BT91:Problem-Kadison-Singer}.

The fast incoherence transform is based on ideas of Ailon and Chazelle~\cite{AC09:Fast-Johnson-Lindenstrauss}, who use the random matrix $\mtx{S}$ to perform dimension reduction.  We believe that the first application of $\mtx{S}$ for randomized linear algebra appears in the paper Woolfe et al.~\cite{WLRT08:Fast-Randomized}, where they use this transform to aid in computing matrix decompositions.  See the works~\cite{AMT11:Blendenpik-Supercharging,HMT11:Finding-Structure,BG12:Improved-Matrix} for further results in this direction.  Liberty's dissertation~\cite{Lib09:Accelerated-Dense} describes other randomized maps that can play a similar role.

\section{Numerical Aspects of Block Kaczmarz} \label{sec:implementation}

The main goal of this paper is to study the theoretical properties of the block Kaczmarz method, but we believe that a short discussion about numerics can also provide some useful insights.  In Section~\ref{sec:why}, we outline some of the situations where we expect the block Kaczmarz method to outperform the simple Kaczmarz algorithm.  Afterward, in Section~\ref{sec:how}, we consider some of the questions that arise when implementing the block Kaczmarz method.  Finally, in Section~\ref{sec:experiments}, we offer some simple computational examples to illustrate our main points.

\subsection{Why Use Block Kaczmarz?} \label{sec:why}

It is natural to ask when it might be better to use the block Kaczmarz scheme instead of the simple Kaczmarz scheme.  The answer to this question depends heavily on the specific application at hand, as well as the architecture of the computers on which the algorithm is implemented.

First, least-squares problems sometimes involve a matrix that admits a natural row paving.  In this case, it may be advantageous to exploit the paving algorithmically.  For example, certain signal processing applications involve multi-sampling schemes, where we collect several batches of uniform time samples of a signal.  Each sample set produces a set of equations that is easy to solve.  The block Kaczmarz method provides an effective way to use this structure~\cite{FS95:Kaczmarz-Based-Approach}.  See Section~\ref{sec:experiments-circ} for a related numerical example.


Second, the block Kaczmarz algorithm can be implemented more efficiently than the simple Kaczmarz algorithm in many computer architectures.  This claim rests on two facts.  First, data transfer now plays a major role in the cost of numerical algorithms.  The block Kaczmarz algorithm is efficient in this regard, because it moves a large block of equations into working memory and operates with it for some time.  In contrast, the simple Kaczmarz method repeatedly transfers new equations into working memory.
Second, the block Kaczmarz algorithm can exploit high-level basic linear algebra subroutines ({\tt BLASx}).  Indeed, inner products dominate the arithmetic in the simple Kaczmarz method, while it is possible to implement the block Kaczmarz algorithm using matrix--vector products.  As a result, block Kaczmarz relies on {\tt BLAS2}, rather than {\tt BLAS1}.  A more detailed discussion of these issues falls outside the scope of this paper.

\subsection{Implementing Block Kaczmarz Methods} \label{sec:how}

The randomized block Kaczmarz method, Algorithm~\ref{alg:randomized-block-kaczmarz}, is easy to describe, but there remain several implementation issues that require attention.

\subsubsection{The Block Update Rule}

Most of the arithmetic in the algorithm occurs when we apply the pseudoinverse $\mtx{A}_\tau^\pinv$ to a vector in the update~\eqref{eqn:block-kacz}.  Equivalently, we must solve an (underdetermined) least-squares problem at each iteration.  The appropriate numerical method depends on a wide variety of issues~\cite{Bjo96:Numerical-Methods}, so we cannot give a universal prescription.
Here are some factors worth considering.
\begin{itemize}
\item	When the blocks in the paving are very small, a direct method based on {QR} decomposition or the {SVD} is likely to be fastest.  A direct method may also be appropriate when the conditioning $\beta/\alpha$ of the paving is high and the matrix is dense.

\item	In case the conditioning $\beta/\alpha$ of the paving is small, we recommend using an iterative method, such as {\tt CGLS}, {\tt LSQR}, or the Chebyshev semi-iterative method.  These techniques may also be appropriate for very sparse problems.  It is not necessary to run these iterations until they have converged fully, and the algorithms will benefit from warm starts provided by the convergence of the outer iteration.
\end{itemize}


\subsubsection{Setting the Convergence Tolerance}

Theorem~\ref{thm:convergence} implies that Algorithm~\ref{alg:randomized-block-kaczmarz} can reduce the error $\enormsq{\hat{\vct{x}} - \vct{x}_{\star}}$ to a level comparable with the convergence horizon.  Let us examine how this fact affects our choice of the convergence tolerance.

Combine the convergence criterion $\enormsq{ \mtx{A}\hat{\vct{x}} - \vct{b} } \leq \eps^2$ with the decomposition $\vct{b} = \mtx{A} \vct{x}_{\star} - \vct{e}$ to obtain
$$
\enormsq{ \mtx{A}(\hat{\vct{x}} - \vct{x}_{\star}) + \vct{e} } \leq \eps^2.
$$
The residual $\vct{e}$ is orthogonal to the range of $\mtx{A}$, so we can apply the Pythagorean theorem and bound the $\ell_2$ norm below to reach
\begin{equation*} 
\sigma_{\min}^2(\mtx{A}) \enormsq{ \hat{\vct{x}} - \vct{x}_{\star} }
	+ \enormsq{\vct{e}}
	\leq \eps^2.
\end{equation*}
It follows that we must set the convergence tolerance $\eps$ so that
\begin{equation} \label{eqn:conv-tol}
\eps^2 > \left[ 1 + \frac{\beta}{\alpha} \right] \enormsq{\vct{e}}.
\end{equation}
Otherwise, we have no guarantee that the algorithm will terminate.

\subsubsection{Checking for Convergence}

The convergence criterion $\normsq{\mtx{A}\hat{\vct{x}} - \vct{b}} \leq \eps^2$ may be somewhat expensive to verify because it involves a multiply with the full matrix $\mtx{A}$. As a consequence, it is better to check for convergence only on occasion.  For instance, if the paving consists of $m$ blocks, we might only evaluate the convergence criterion every $m$ iterations.

\subsubsection{Randomized Cyclic Control} \label{sec:rand-cyclic}

Finally, in practice, the block Kaczmarz algorithm is more effective if we use a randomized control scheme different from the one we have analyzed.  Recall that Algorithm~\ref{alg:randomized-block-kaczmarz} samples a uniformly random block at each iteration, independent of all previous choices.  Instead, we recommend using an alternative scheme that samples blocks \emph{without} replacement.  We can express this method formally as follows.

\begin{enumerate}
\item	For each $q = 0, 1, 2, \dots$, draw an independent, uniformly random permutation $\pi_q$ on the indices $\{1, \dots, m \}$ of the blocks.

\item	For each iteration $j \geq 1$, decompose $j = qm + r$, where $q$ and $r$ are nonnegative integers with $0 \leq r < m$.  Select the block $\tau(j) = \tau_{\pi_q(r + 1)}$.
\end{enumerate}

\noindent
In other words, at each epoch, we cycle through all the blocks in random order. 

In Section~\ref{sec:experiments-circ}, we offer some numerical evidence that this alternative control scheme is more effective than the approach used in Algorithm~\ref{alg:randomized-block-kaczmarz}.  At present, compelling explanations for this phenomenon are lacking.  See~\cite{RR12:Beneath-Valley} for some discussion and conjectures.



\subsection{Numerical Experiments}\label{sec:experiments} 

In this section, we present some numerical experiments to complement our discussions about the implementation and theoretical performance of the randomized block Kaczmarz method,
Algorithm~\ref{alg:randomized-block-kaczmarz}.  In Section~\ref{sec:experiments-circ},
we consider an example where the block method has a clear advantage over the simple method.
In Section~\ref{sec:experiments-dense}, we examine some other cases where the benefits
of the block method are more subtle.

\subsubsection{Submatrices with Fast Multiplies} \label{sec:experiments-circ}

First, we study the situation where the matrix $\mtx{A}$ comes with a natural partition
of the rows into well-conditioned blocks, each admitting a fast matrix--vector
multiply.
As we have discussed in Section~\ref{sec:fast-update},
the block method has a significant advantage over the simple method in this
setting.  Our experiments bear out this point.

We build a $300 \times 100$ matrix $\mtx{A}_{\rm circ}$ by stacking $15$ partial random circulant
matrices:
$$
\mtx{A}_{\rm circ} = \begin{bmatrix} \mtx{C}_1 \\ \vdots \\ \mtx{C}_{15} \end{bmatrix}
\quad\text{where}\quad
\mtx{C}_{i} = \mtx{R} \mathbf{F}^* \mtx{E}_i \mathbf{F}
\quad\text{for $i = 1, \dots, 15$.}
$$
In this expression,
\begin{itemize}
\item	$\mtx{R}$ is the restriction to the first $20$ coordinates,
\item	$\mathbf{F}$ is the $100 \times 100$ unitary DFT, and
\item	$\mtx{E}_{i}$ is a random diagonal matrix whose entries are independent Rademacher
random variables.  Each $\mtx{E}_i$ is drawn independently from the others.
\end{itemize}
This construction ensures that each $\mtx{C}_i$ is a section of a circulant matrix
with orthonormal rows.
As a consequence, the pseudoinverse equals the adjoint: $\mtx{C}_i^\pinv = \mtx{C}_i^*$.
Furthermore, we can apply either $\mtx{C}_i$ or $\mtx{C}_i^*$ to a vector quickly%
\footnote{Recall that an FFT or inverse FFT of length $d$ requires roughly $d \log_2(d)$ complex floating-point
operations (flops), although the precise count depends on arithmetic properties of the number $d$.}
using one FFT and one inverse FFT, each of length $d$.


Using this matrix, we perform a small {\sc Matlab} experiment to compare the behavior of randomized block Kaczmarz and randomized simple Kaczmarz.
\begin{enumerate}
\item	Draw a random matrix $\mtx{A}_{\rm circ}$ according to the recipe above, and fix it.  Let $T$
be the row partition with 15 blocks induced by the circulant structure. 
\item	Let $\vct{x}_{\star} = [1, \dots, 1]^*$, and form $\vct{b} = \mtx{A} \vct{x}_{\star}$.
Set the initial iterate $\vct{x}_0 = \vct{0}$.
\item	For each of 100 trials,
\begin{enumerate}
\item	Apply the simple Kaczmarz method from Section~\ref{sec:simple-kacz} to produce iterates $\{ \vct{x}_j^{\rm simp} : j \geq 0 \}$.
\item	Apply Algorithm~\ref{alg:randomized-block-kaczmarz} to produce iterates $\{ \vct{x}_j^{\rm block} : j \geq 0 \}$.
\end{enumerate}
\item	For each algorithm, at each iteration $j$, compute the minimum, median, and maximum value of the approximation error $\enorm{\vct{x}^{\rm alg}_j - \vct{x}_{\star}}$ over the 100 trials.
\end{enumerate}
In this experiment, we form the matrix $\mtx{A}_{\rm circ}$ in the first step and store it.  To perform the update~\eqref{eqn:simple-kacz}, the simple Kaczmarz method loads the required row from memory without doing any extra computation, so the cost of the simple update rule is just $4d$ complex flops, where $d = 100$.  The block Kaczmarz method uses the structure of the circulant blocks, as described in the previous paragraph, to perform the update~\eqref{eqn:block-kacz} with about $4 d \log_2(d) + 4d$ complex flops.

In Figure~\ref{fig:circ} [left panel], we plot the approximation error as a function of the number of flops expended by each algorithm.  The heavy lines indicate the median error over 100 trials, while the minimum and maximum errors describe the boundaries of the shaded region.  The block algorithm reduces the error to $10^{-11}$ in about $1.6 \times 10^6$ complex flops, while the simple algorithm requires about $3.2 \times 10^7$ complex flops to achieve the same result.  This amounts to a 20-fold reduction in the amount of arithmetic!

In Section~\ref{sec:rand-cyclic}, we claim that the block Kaczmarz algorithm is more effective when we use an alternative control scheme that samples blocks without replacement.  To illustrate this point, let us perform an experiment with the block circulant matrix using the same methodology as above.  Figure~\ref{fig:circ} [right panel] compares the performance of the block Kaczmarz method when we sample blocks with and without replacement.  This chart shows that sampling without replacement reduces the amount of computation by about 15\%.  In other experiments, we have seen even more substantial improvements.

\begin{figure}[t]
\begin{center}
\includegraphics[height=2.5in]{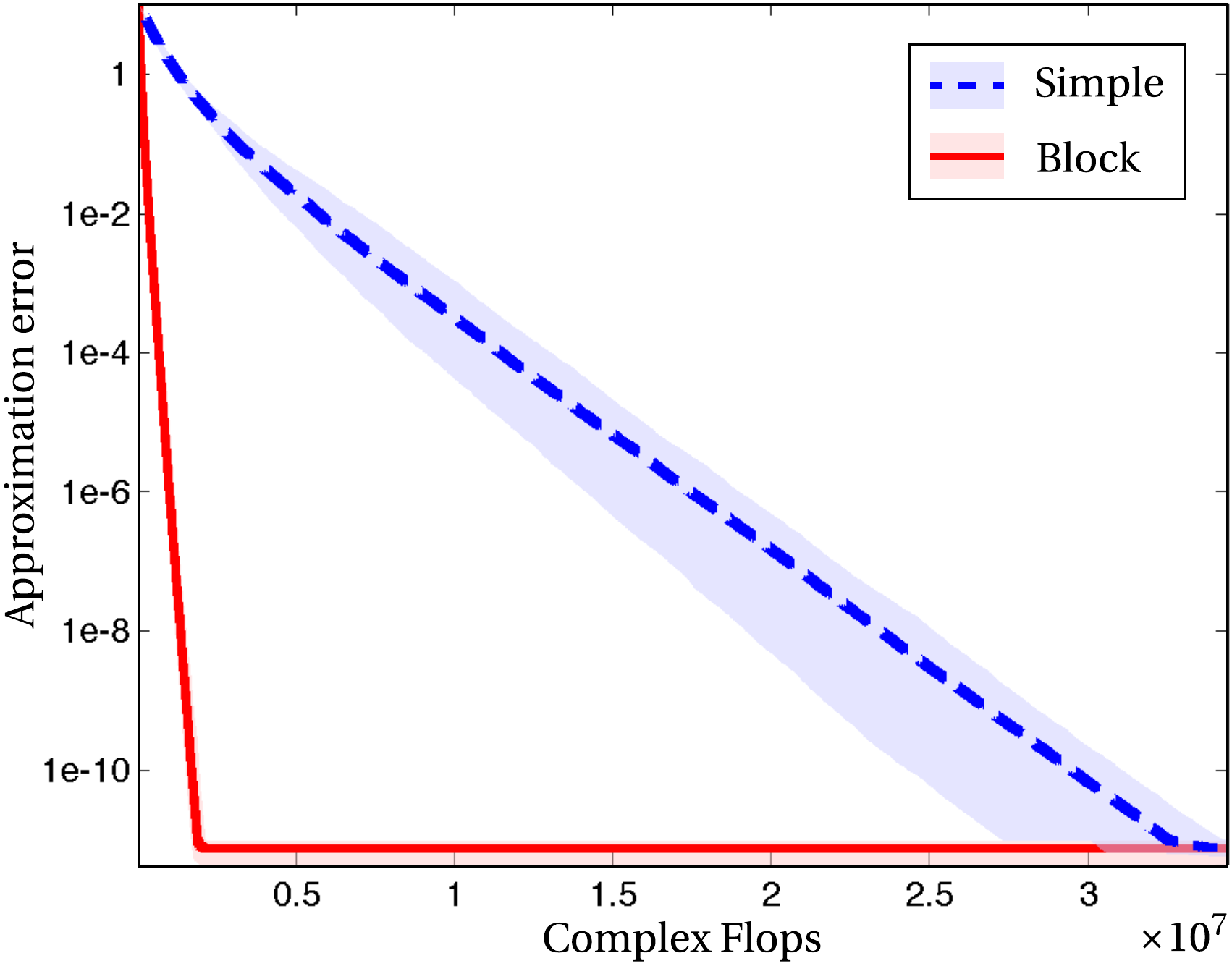}
\includegraphics[height=2.5in]{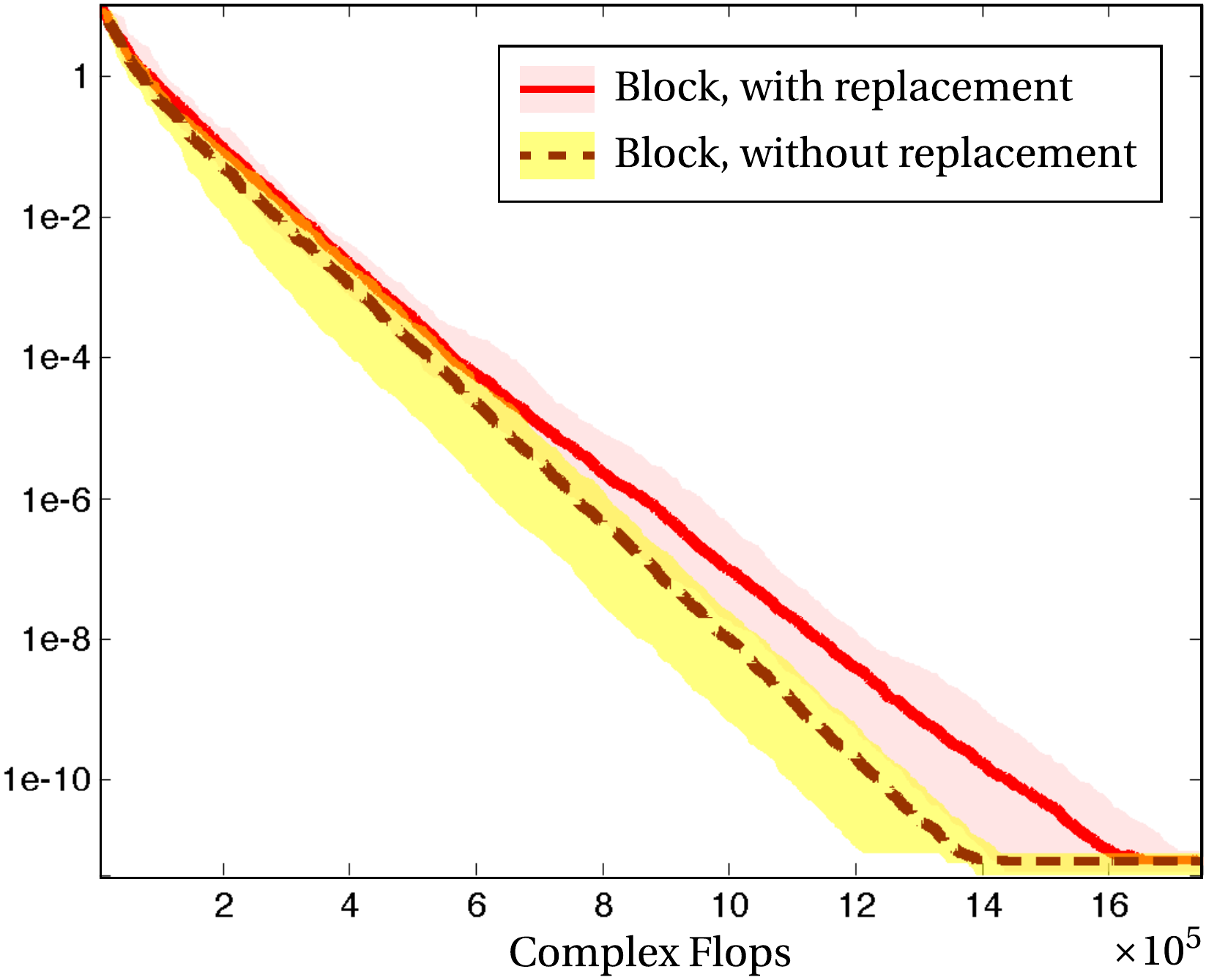}
\caption{({Convergence with a Block Circulant Matrix}){\bf .}
The matrix $\mtx{A}_{\rm circ}$ is a fixed $300 \times 100$ matrix consisting
of $15$ partial circulant blocks.
For each algorithm, we chart the approximation
error $\enorm{ \vct{x}_j - \vct{x}_{\star} }$ as a function of the
number of complex flops performed.
The heavy line denotes the median error over 100 independent trials;
the minimum and maximum errors envelop the shaded region.
See Section~\ref{sec:experiments-circ} for details.
{\bf [{Left}]} Convergence of the randomized block Kaczmarz method,
Algorithm~\ref{alg:randomized-block-kaczmarz}, versus the
randomized simple Kaczmarz method (Section~\ref{sec:simple-kacz}).
{\bf [{Right}]}  Convergence of two variants  (Section~\ref{sec:rand-cyclic})
of the randomized block Kaczmarz method.
One samples blocks independently with replacement;
the other samples blocks without replacement in each epoch.
} \label{fig:circ}
\end{center}
\end{figure}

\begin{figure}[t]
\begin{center}
\includegraphics[height=1.75in]{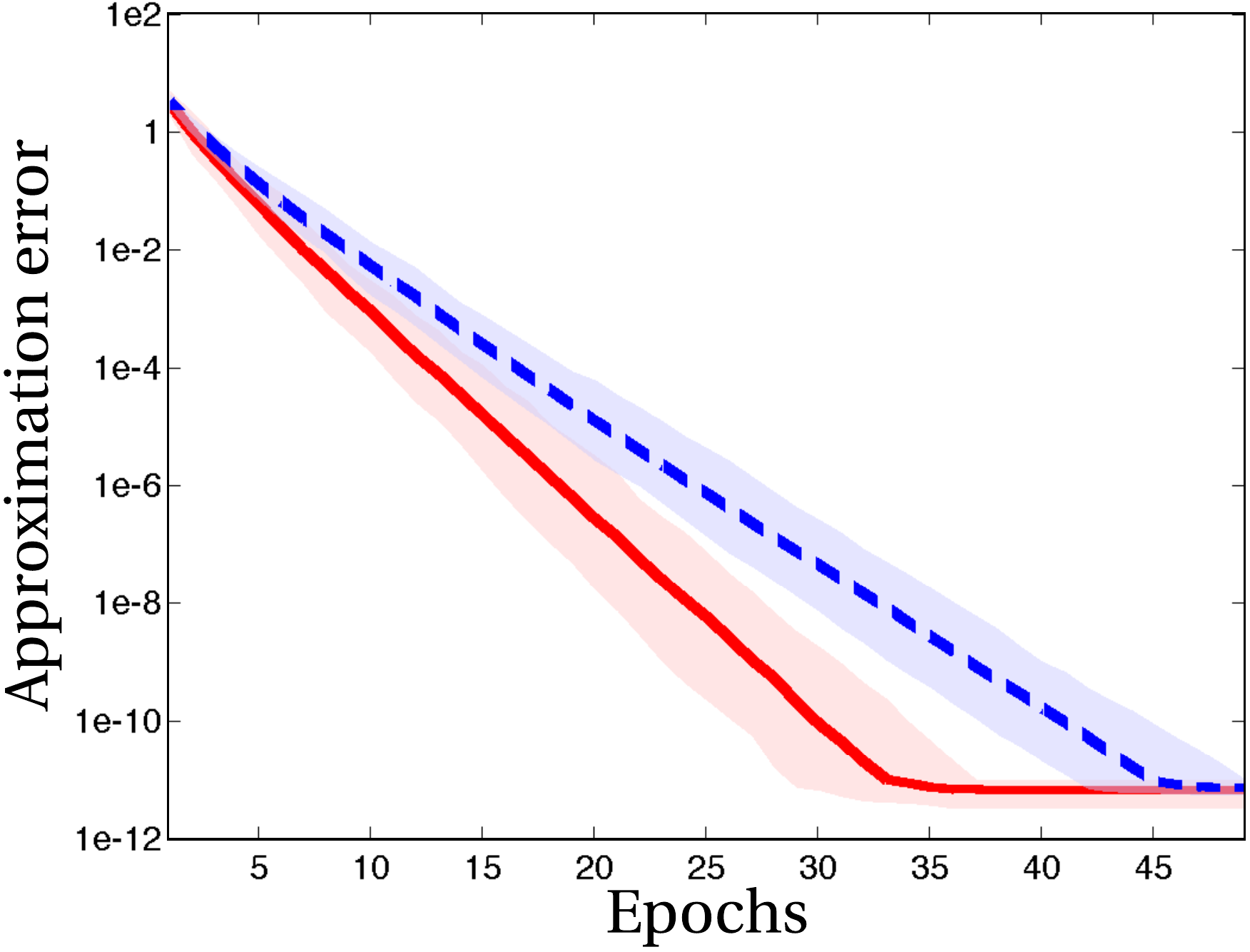}
\raisebox{0.025in}{\includegraphics[height=1.7in]{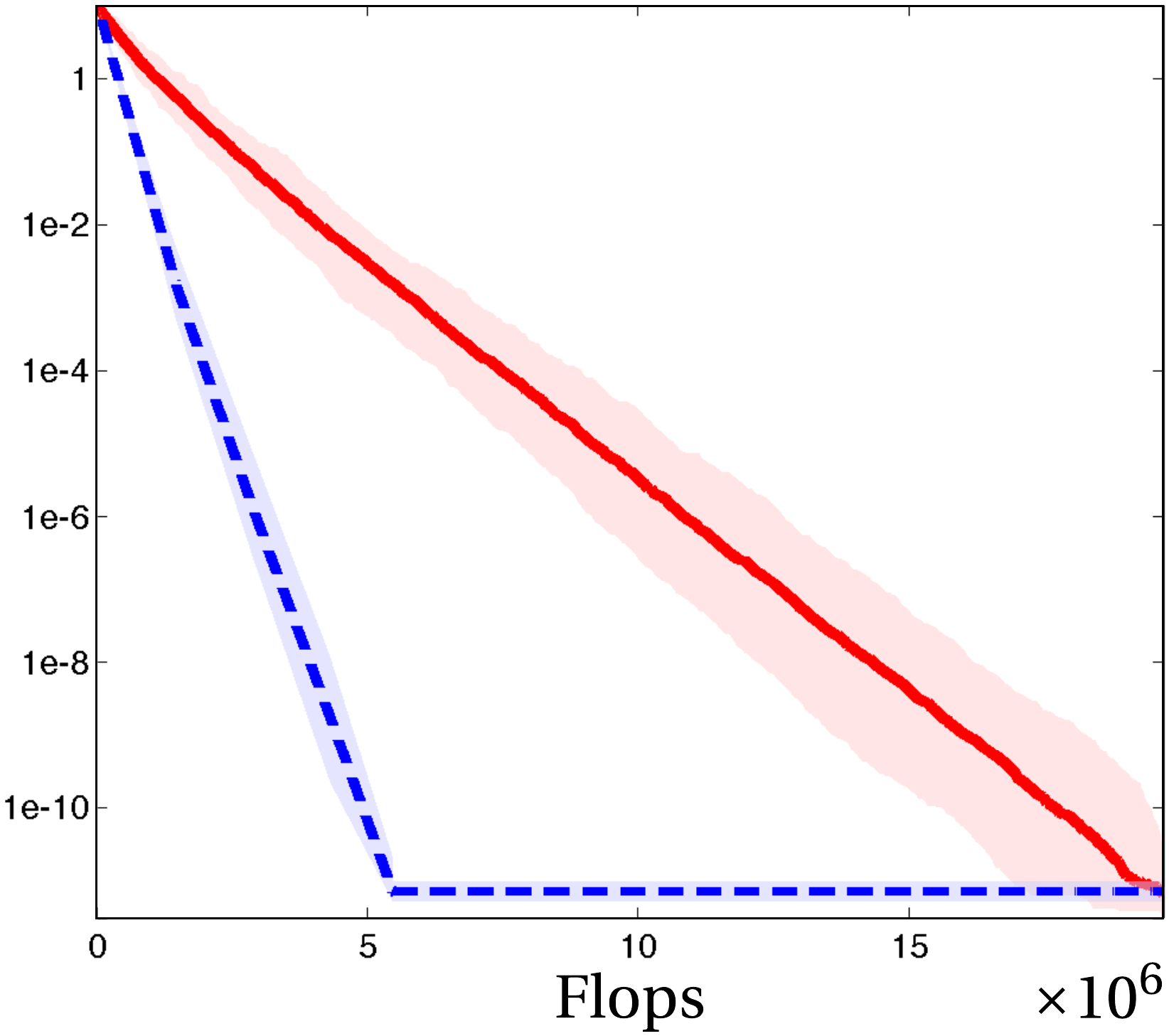}
\includegraphics[height=1.7in]{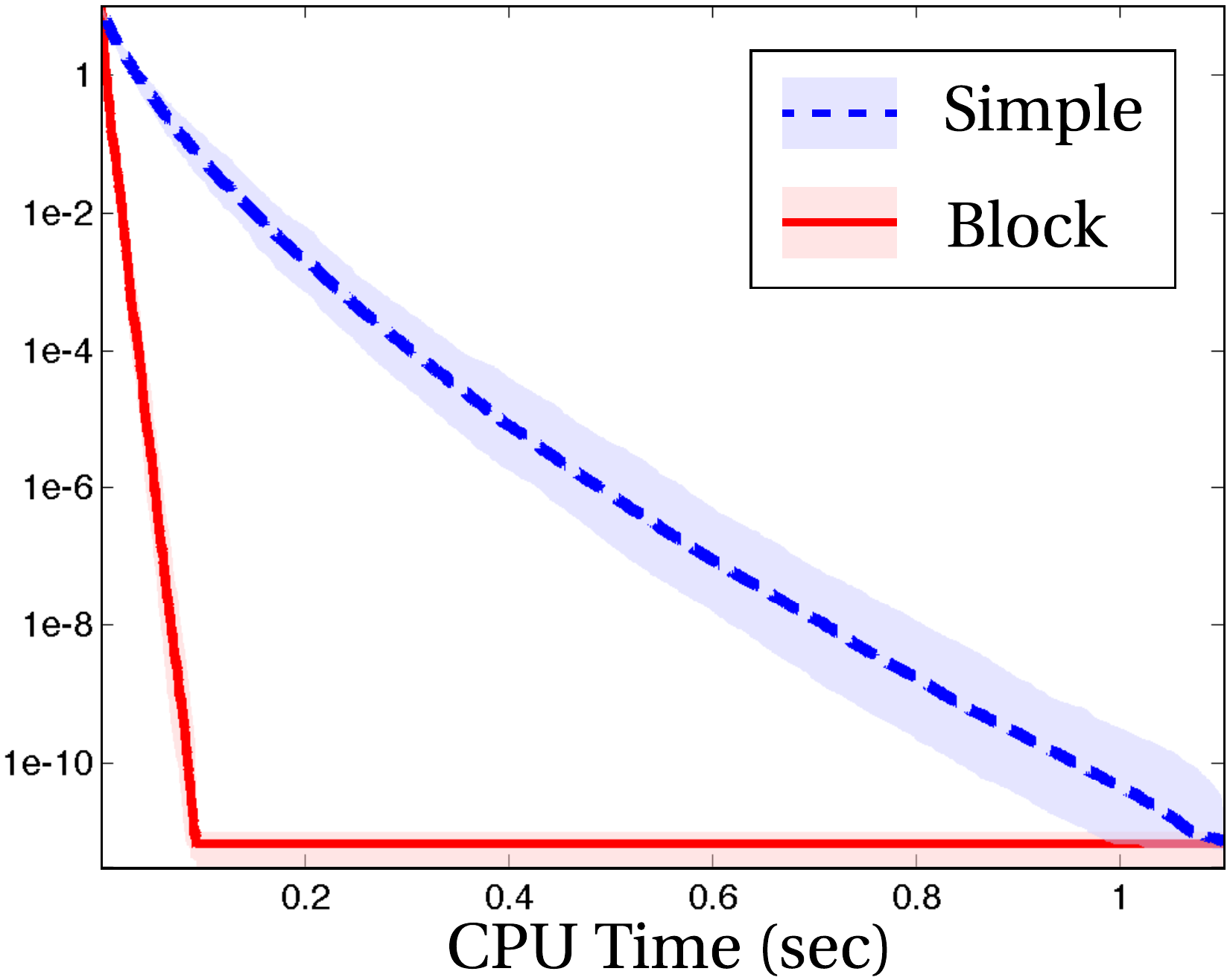}}
\caption{({Convergence with a Random Matrix}){\bf .}
The matrix $\mtx{A}_{\rm rdm}$ is a fixed $300 \times 100$ matrix whose rows are drawn
uniformly at random from the Euclidean unit sphere and partitioned into 10
blocks of equal size.
For the iterates generated by each algorithm, we plot the decay of the approximation error
$\enorm{\vct{x}_j - \vct{x}_{\star}}$
as a function of three computational resources.
The heavy line denotes the median error over 100 independent trials;
the minimum and maximum errors envelop the shaded region.
{\bf [{Left}]} Approximation error as a function of the number of epochs
elapsed.
{\bf [{Center}]} Approximation error as a function of flops.
{\bf [{Right}]} Approximation error as a function of CPU time.
See Section~\ref{sec:experiments-dense} for details.}
\label{fig:dense}
\end{center}
\end{figure}

\subsubsection{Unstructured Submatrices} \label{sec:experiments-dense}

Next, we consider some examples where we cannot exploit the structure of the submatrices
to accelerate the block Kaczmarz method.  Although our theory does not yield
higher rates of convergence, the numerical evidence still suggests that the block
Kaczmarz method offers a decisive improvement over the simple Kaczmarz method.

First, we consider a $300 \times 100$ real matrix $\mtx{A}_{\rm rdm}$ whose rows are drawn
independently and uniformly at random from the $\ell_2$ unit sphere.  We partition
the rows of this matrix into $10$ equal blocks of $30$ consecutive rows each.
A heuristic application of the Bai--Yin Law~\cite[Thm.~2]{BY93:Limit-Smallest}
shows that the singular values of each $30 \times 100$ submatrix fall in the
range $1 \pm \sqrt{30/100}$.  Thus, the paving parameters $\alpha \approx 0.2$
and $\beta \approx 2.4$, and the condition number of each block is bounded by
$\beta / \alpha \approx 4.8$.

We perform an experiment with this matrix that follows the same methodology as
in Section~\ref{sec:experiments-dense}.
To implement the block Kaczmarz update rule~\eqref{eqn:block-kacz}, we use
the {\tt CGLS} algorithm to solve the underdetermined least-squares problem.
We use warm starts, and we halt the least-squares iteration before it has
fully converged to control the computational cost.
Our code counts the actual number of (real) flops during
each iteration, and it measures the actual CPU time that is expended.

Figure~\ref{fig:dense} shows three different views of the data we collected
during this experiment.  The left panel shows the approximation error for
the simple and block Kaczmarz method as a function of the number of epochs
elapsed.  As we expect from the discussion in Section~\ref{sec:unstruct},
the two algorithms show a similar rate of convergence in this view.  In
fact, the block method does somewhat better than the theory predicts.
In the center panel, we chart the approximation error as a function of the
number of flops performed.  The simple algorithm requires about
$5 \times 10^6$ flops to achieve an error of $10^{-11}$, while the block
algorithm takes $2 \times 10^7$ flops to reach the same point.
Thus, the block method needs four times as much arithmetic.  But the story
is not over.  The right panel displays the approximation error as a function
of CPU time.  We discover that our implementation of the block method is 10
times faster than the simple method!  It is dangerous to draw conclusions
about the efficiency of an algorithm from the behavior of a {\sc Matlab} script.
Nevertheless, we regard this experiment as limited evidence
of the computational advantages of the block method that we outlined in Section~\ref{sec:why}.

\begin{wrapfigure}{r}{0.5\textwidth}
\begin{center}
\includegraphics[width=.5\textwidth]{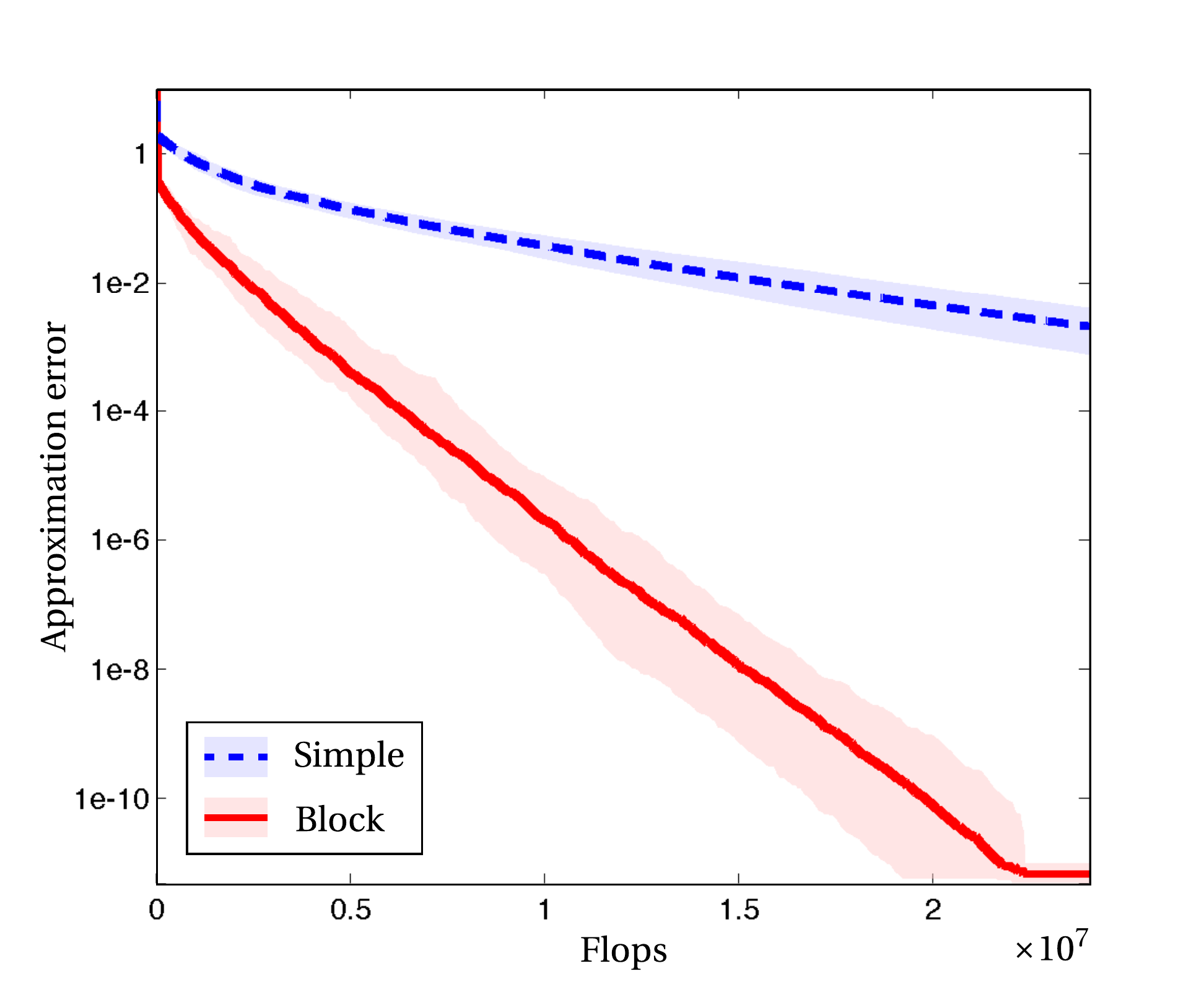}
\caption{({Convergence with a Coherent Matrix}){\bf .}
The matrix $\mtx{A}_{\rm coh}$ is a fixed $300 \times 100$ matrix with strongly correlated
rows that are partitioned into 10 blocks of equal size.
We compare the approximation error $\enorm{\vct{x}_j - \vct{x}_{\star}}$ for
the iterates generated by each algorithm as a function of the number of flops
performed.  The heavy line denotes the median error over 100 trials;
the minimum and maximum error describe the boundaries of the shaded region.
See Section~\ref{sec:experiments-dense} for details.}
\label{fig:coh}
\end{center}
\end{wrapfigure}

Finally, we consider one other type of unstructured matrix, where our analysis
offers little guidance.  Let $\tilde{\mtx{A}}_{\rm coh}$ be a $300 \times 100$ matrix whose
entries are drawn independently at random from the uniform distribution on $[0.5, 1]$.
We form a standardized matrix $\mtx{A}_{\rm coh}$ by normalizing the rows of
$\tilde{\mtx{A}}_{\rm coh}$.
Since this matrix is dense and positive, the rows of this matrix tend to be strongly
correlated with each other; the maximum inner product between rows is typically $0.98$
or higher.  Furthermore, the norm of this matrix is usually close to the maximum
possible value $\sqrt{300}$.
  Our theory provides no map for this
\emph{terra incognita}.
Yet we brazenly partition the rows into $10$ equal blocks, each with $30$ contiguous rows,
as in the previous example.

We perform the same experiment for $\mtx{A}_{\rm coh}$ as we did with
$\mtx{A}_{\rm rdm}$ to compare the behavior of the simple
Kaczmarz method and the randomized Kaczmarz method.
Figure~\ref{fig:coh} shows the results of this trial.
We see that the simple Kaczmarz method scarcely reduces the error at all,
while the block method achieves a healthy rate of convergence.  
The paper~\cite{NW12:Two-Subspace-Projection} provides an
analysis of this example in the case where each block contains
two rows, but we do not yet have a complete explanation for the performance
of Algorithm~\ref{alg:randomized-block-kaczmarz} when the blocks are larger.


\section{Related Work and Future Directions} \label{sec:future}

We conclude with a short discussion about recent research on randomized Kaczmarz methods and block variants of the simple Kaczmarz method.  Finally, we offer some musings about opportunities for further research.

\subsection{Kaczmarz Methods with Randomized Control}

The Kaczmarz method was originally introduced in the paper~\cite{Kac37:Angenaeherte-Aufloesung}.  It was reinvented by researchers in tomography~\cite{GBH70:Algebraic-Reconstruction} under the appellation ``algebraic reconstruction technique'' (ART).  See Byrne's book~\cite{Byr08:Applied-Iterative} for a contemporary summary of this literature.

The classical variants of the Kaczmarz method rely on deterministic mechanisms for selecting a row at each iteration.  Indeed, the simplest version just cycles through the rows in order.  It has long been known that the cyclic control scheme performs badly when the rows are arranged in an unhappy order~\cite{HS78:Angles-Null}.
The literature contains empirical evidence~\cite{HM93:Algebraic-Reconstruction} that \emph{randomized} control mechanisms may be more effective, but until recently there was no compelling theoretical analysis to support this observation.

The paper~\cite{SV09:Randomized-Kaczmarz} of Strohmer and Vershynin is significant because it provides the first explicit convergence proof for a randomized variant of the Kaczmarz algorithm.  This work establishes that a randomized control scheme leads to an expected linear convergence rate, which can be written in terms of geometric properties of the matrix.  In contrast, deterministic convergence analyses that appear in the literature often lead to expressions, e.g.,~\cite[Eqn.~(1.2)]{XZ02:Method-Alternating}, whose geometric meaning is not evident.



In the wake of Strohmer and Vershynin's work~\cite{SV09:Randomized-Kaczmarz}, several other researchers have written about randomized versions of the Kaczmarz scheme and related topics.  In particular,
Needell demonstrates that the randomized Kaczmarz method converges, even when the linear system is inconsistent~\cite{Nee10:Randomized-Kaczmarz}.
Zouzias and Freris~\cite{ZF12:Randomized-Extended} exhibit a randomized procedure, based on ideas from~\cite{Pop98:Extensions-Block-Projections}, that can reduce the size of the residual $\vct{e}$.  Leventhal and Lewis~\cite{LL10:Randomized-Methods} provide an analysis of a randomized iteration for solving least-squares problems with polyhedral constraints, while Richt{\'a}rik and Tak{\'a}{\v c} have extended these ideas to more general optimization problems~\cite{RT11:Iteration-Complexity}.  Some other references include~\cite{EN11:Acceleration-Randomized,RT11:Iteration-Complexity,CP12:Almost-Sure-Convergence,NW12:Two-Subspace-Projection}.


\subsection{Block Kaczmarz Methods}

The block Kaczmarz update rule~\eqref{eqn:block-kacz} we are studying is originally due to Elfving~\cite[Eqn.~(2.2)]{Elf80:Block-Iterative-Methods}.  This update is a special case of a general framework due to Eggermont et al.~\cite{EHL81:Iterative-Algorithms}.  Byrne describes a number of other block Kaczmarz methods in his book~\cite[Chap.~9]{Byr08:Applied-Iterative}.

By now, there is an extensive literature on the convergence behavior of block projection methods, a class of algorithms that includes block Kaczmarz schemes.  In particular, we call out the work of
Xu and Zikatanov~\cite{XZ02:Method-Alternating}, which contains a refined convergence analysis that applies to a wide range of algorithms.  Nevertheless, to our knowledge, Algorithm~\ref{alg:randomized-block-kaczmarz} is the only block Kaczmarz method that offers an (expected) linear rate of convergence that depends explicitly on geometric properties of the system matrix $\mtx{A}$ and its submatrices $\mtx{A}_{\tau}$.

Most of the literature on block Kaczmarz methods assumes that a partition $T$ of the rows of the matrix is provided as part of the problem data.  We are aware of some research on the prospects for partitioning a matrix in a manner that is favorable for block Kaczmarz methods.  In particular, Popa~\cite{Pop99:Block-Projections-Algorithms} has introduced an algorithm for partitioning a sparse matrix so that each block contains mutually orthogonal rows.  Popa has pursued this idea in a sequence of papers, including~\cite{Pop01:Fast-Kaczmarz-Kovarik,Pop04:Kaczmarz-Kovarik-Algorithm}.  We believe that our work is the first to recognize the natural connection between the paving literature and the block Kaczmarz method.

\subsection{Some Future Directions}

There are a number of interesting open questions connected with Algorithm~\ref{alg:randomized-block-kaczmarz}.  First, empirical experiments make it clear that a control strategy based on sampling \emph{without} replacement is far more effective than our strategy based on sampling \emph{with} replacement.  At present, there is no compelling explanation for this phenomenon (but see~\cite{RR12:Beneath-Valley}).  Second, we think that other types of block Kaczmarz update rules~\cite[Chap.~9]{Byr08:Applied-Iterative} would also submit to a convergence analysis similar to the proof of Theorem~\ref{thm:convergence}.
Third, it might be worthwhile to extend the argument of Zouzias and Freris~\cite{ZF12:Randomized-Extended} to develop a method with a smaller convergence horizon.  

Block algorithms for numerical linear algebra are almost as old as the field itself.  Gauss himself suggested a block version of his algorithm for solving linear systems~\cite{Gau95:Theoria-Combinationis,Ben09:Key-Moments}.  Many other algorithms for numerical linear algebra~\cite{GVL96:Matrix-Computations} and least-squares problems~\cite{Bjo96:Numerical-Methods} admit natural block variants.  The field of optimization also contains a wide variety of block methods, such as~\cite{AC89:Block-Iterative-Projection,Tse93:Dual-Coordinate}.

We believe that row pavings can also play a role in the development and analysis of other types of block algorithms.  For instance, it is straightforward to develop a block version of the randomized Gauss--Seidel algorithm introduced in~\cite{LL10:Randomized-Methods}.   Using the techniques in this paper, we can easily bound the rate of convergence for this algorithm in terms of the properties of a \emph{column paving} of the system matrix.  It would be interesting to pursue this example and others.


\appendix

\section{The Fast Incoherence Transform} \label{app:fit}

The goal of this Appendix is to prove Proposition~\ref{prop:fit}, which states that the fast incoherence transform makes a standardized matrix incoherent with high probability.  To that end, suppose that $\mtx{A}$ is a matrix with $n$ unit-norm rows, denoted $\vct{a}_1, \dots, \vct{a}_n$.  Recall that the fast incoherence transform is the matrix $\mtx{S} = \mathbf{F}\mtx{E}$, where $\mathbf{F}$ is the $n \times n$ unitary DFT and $\mtx{E}$ is a diagonal matrix whose entries $\xi_1, \dots, \xi_n$ are independent Rademacher random variables.

Consider the matrix $\mtx{W} := \mtx{SA}$, and introduce the Gram matrix of its rows $\mtx{G} := \mtx{WW}^* = \mtx{SAA}^* \mtx{S}^*$. Expanding this product, we find that the $(i, \ell)$ entry of $\mtx{G}$ takes the form
$$
g_{i\ell} = \sum\nolimits_{jk} \xi_j \xi_k \cdot
	{\rm f}_{ij} \bar{\rm f}_{\ell k} \cdot \ip{ \vct{a}_j }{ \vct{a}_k },
$$
where we write ${\rm f}_{ij}$ for the $(i, j)$ entry of the DFT matrix.  (We use the convention that the inner product is antilinear in the second coordinate.)  This expression allows us to calculate the expectation of the Gram matrix with ease.  Since the rows $\mathbf{f}_1, \dots, \mathbf{f}_n$ of the DFT form an orthonormal family,
$$
\Expect g_{i\ell} = \sum\nolimits_j {\rm f}_{ij} \bar{\rm f}_{\ell k} \enormsq{\vct{a}_j}
	= \ip{ \mathbf{f}_{i} }{ \mathbf{f}_\ell } = \delta_{i\ell},
$$
where $\delta_{i\ell}$ is the Kronecker delta.  Note that we have invoked the standardization assumption here.

The real content of the argument is to obtain a bound on \emph{how much} the entries of the Gram matrix deviate from their expected values.  Fix a pair $(i, \ell)$ of indices, not necessarily distinct, and define the random variable
$$
Y := \abs{ g_{i\ell} - \delta_{i\ell} }.
$$
We can rewrite $Y$ is a more symmetric manner:
\begin{equation} \label{eqn:Y-chaos}
Y = \abs{ \sum\nolimits_{j \neq k} \xi_j \xi_k \cdot y_{jk} }
\quad\text{where}\quad
y_{jk} := \frac{1}{2} \left( {\rm f}_{ij} \bar{\rm f}_{\ell k}
	+ \bar{\rm f}_{ik} {\rm f}_{\ell j}  \right) \ip{ \vct{a}_j }{ \vct{a}_k }.
\end{equation}
We see that the random variable $Y$ is a symmetric, homogeneous, second-order Rademacher chaos.  We can bound the probability that $Y$ is large by invoking a result of Hanson and Wright~\cite{HW71:Bound-Tail}; see~\cite[Chap.~8]{FR12:Mathematical-Introduction} for a modern proof.

\begin{proposition}[Hanson--Wright] \label{prop:hw}
Consider the chaos variable $Y$ defined in~\eqref{eqn:Y-chaos}.  Let $\mtx{Y}$ be the Hermitian matrix whose $(j, k)$ entry is $y_{jk}$ and whose diagonal entries are zero.  Then
$$
\Prob{ Y \geq t }
	\leq 2 \exp\left\{ - \cnst{c_{hw}} \cdot \min\left\{ \frac{t}{\norm{\mtx{Y}}}, \frac{t^2}{\fnormsq{\mtx{Y}}} \right\} \right\}
\quad\text{for $t \geq 0$.}
$$
The number $\cnst{c_{hw}}$ is a positive, universal constant.
\end{proposition}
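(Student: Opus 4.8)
The plan is to establish the bound by the Laplace-transform (Chernoff) method applied directly to the Rademacher quadratic form. Writing $Z := \sum_{j\neq k}\xi_j\xi_k y_{jk}$, the Hermiticity of $\mtx{Y}$ guarantees that $Z$ is real-valued, and $Y=\abs{Z}$. Since $\{\xi_j\}$ and $\{-\xi_j\}$ have the same law, $Z$ is symmetric, so it suffices to bound $\Prob{Z\ge t}$ and double the result; this accounts for the factor $2$ and the two-sided absolute value. First I would fix $\lambda>0$ and aim for an exponential-moment bound of the form $\Expect \e^{\lambda Z}\le \exp\bigl(\cnst{C}\lambda^2\fnormsq{\mtx{Y}}\bigr)$ valid throughout a range $\lambda\le \cnst{c}/\norm{\mtx{Y}}$. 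Markov's inequality then yields
$$
\Prob{Z\ge t}\le \exp\bigl(-\lambda t+\cnst{C}\lambda^2\fnormsq{\mtx{Y}}\bigr),
$$
and optimizing over the admissible values of $\lambda$ will produce the stated minimum of a sub-Gaussian and a sub-exponential tail.

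The central difficulty is that $Z$ is a genuine second-order chaos, so its exponential moment does not factorize. The standard remedy is \emph{decoupling}: introduce an independent copy $\xi'_1,\dots,\xi'_n$ and compare $Z$ with the decoupled chaos $Z':=\sum_{j,k}\xi_j\xi'_k y_{jk}=\xi^{*}\mtx{Y}\xi'$. A decoupling inequality for convex functions (de la Pe\~na and Montgomery-Smith) bounds $\Expect\e^{\lambda Z}$ by $\Expect\e^{\cnst{c}\lambda Z'}$ up to a universal rescaling of $\lambda$. The payoff is that, conditioned on $\xi'$, the quantity $Z'=\sum_j \xi_j\,(\mtx{Y}\xi')_j$ is an ordinary Rademacher sum with fixed coefficients, so (splitting into real and imaginary parts and applying Hoeffding's lemma) we get $\Expect_{\xi}\e^{\lambda Z'}\le \exp\bigl(\tfrac12\lambda^2\enormsq{\mtx{Y}\xi'}\bigr)$ up to constants.

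It then remains to average over $\xi'$, which requires controlling $\Expect_{\xi'}\exp\bigl(\tfrac12\lambda^2\enormsq{\mtx{Y}\xi'}\bigr)$, the exponential moment of the \emph{positive} quadratic form $\enormsq{\mtx{Y}\xi'}=(\xi')^{*}\mtx{Y}^{*}\mtx{Y}\xi'$. Here I would diagonalize $\mtx{Y}^{*}\mtx{Y}$, whose eigenvalues are the squared singular values $\sigma_i^2$ with $\sigma_1^2=\norm{\mtx{Y}}^2$ and $\sum_i\sigma_i^2=\fnormsq{\mtx{Y}}$, and compare the Rademacher form to its Gaussian counterpart by a contraction principle. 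The Gaussian exponential moment equals $\prod_i(1-\lambda^2\sigma_i^2)^{-1/2}$, which converges exactly when $\lambda\norm{\mtx{Y}}<1$ and is bounded by $\exp\bigl(\cnst{C}\lambda^2\fnormsq{\mtx{Y}}\bigr)$ throughout the range $\lambda\norm{\mtx{Y}}\le \cnst{c}$. Assembling the three steps gives the target moment-generating-function bound. The final Chernoff optimization splits into two regimes: the unconstrained optimum $\lambda\sim t/\fnormsq{\mtx{Y}}$ is admissible precisely when $t\lesssim \fnormsq{\mtx{Y}}/\norm{\mtx{Y}}$, giving the sub-Gaussian tail $\exp\bigl(-\cnst{c}\,t^2/\fnormsq{\mtx{Y}}\bigr)$; otherwise one saturates $\lambda\sim 1/\norm{\mtx{Y}}$ and obtains $\exp\bigl(-\cnst{c}\,t/\norm{\mtx{Y}}\bigr)$. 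Their minimum is the asserted bound.

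The hard part will be the third step, namely controlling the second-level positive quadratic form uniformly up to the radius of convergence $\lambda=1/\norm{\mtx{Y}}$: this is exactly where the competition between the Frobenius norm (the aggregated variance, governing small deviations) and the spectral norm (the worst eigendirection, governing large deviations) is resolved, and it is responsible for the two-regime minimum in the statement. An alternative route that sidesteps the decoupled moment-generating function is the moment method: decoupling and symmetrization furnish the chaos moment bound $\norm{Z}_{L^{2p}}\lesssim \sqrt{p}\,\fnorm{\mtx{Y}}+p\,\norm{\mtx{Y}}$, after which a routine moment-to-tail conversion reproduces the same estimate. The same Frobenius-versus-spectral dichotomy reappears there as the two terms in the moment growth.
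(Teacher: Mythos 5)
The paper does not actually prove Proposition~\ref{prop:hw}: it is invoked as a known result, attributed to Hanson and Wright~\cite{HW71:Bound-Tail}, with~\cite[Chap.~8]{FR12:Mathematical-Introduction} cited for a modern proof. Your outline is essentially that modern proof --- decoupling \`a la de la Pe\~na--Montgomery-Smith, conditional Hoeffding for the linearized chaos, a Gaussian comparison for the exponential moment of the positive quadratic form, and a two-regime Chernoff optimization --- so as a sketch it is sound and consistent with what the paper relies on; nothing is gained or lost relative to the cited route. Two small points deserve care. First, your symmetry argument is wrong as stated: the map $\xi\mapsto-\xi$ leaves the quadratic chaos $Z=\sum_{j\neq k}\xi_j\xi_k y_{jk}$ \emph{invariant} rather than negating it, and $Z$ is genuinely not symmetric in general (for $n=3$ and all $y_{jk}=1$ the chaos takes the value $6$ with probability $1/4$ and $-2$ with probability $3/4$). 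The factor of $2$ is still legitimate, but the correct justification is to apply the one-sided bound separately to $\mtx{Y}$ and $-\mtx{Y}$, which have identical spectral and Frobenius norms. Second, in this application $\mtx{Y}$ is complex Hermitian while the $\xi_j$ are real, so $Z$ coincides with the chaos of the real symmetric matrix $\operatorname{Re}(\mtx{Y})$; since $\norm{\operatorname{Re}(\mtx{Y})}\leq\norm{\mtx{Y}}$ and $\fnorm{\operatorname{Re}(\mtx{Y})}\leq\fnorm{\mtx{Y}}$, it suffices to prove the real symmetric case, which cleans up the ``split into real and imaginary parts'' step you gesture at. With those repairs your argument goes through.
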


To apply this result, we need to obtain bounds for the Frobenius norm and spectral norm of the matrix $\mtx{Y}$.  Note that
$$
\mtx{Y} = \frac{1}{2} (\mtx{Z} + \mtx{Z}^*)
\quad\text{where}\quad
\mtx{Z} := \diag(\mathbf{f}_i) \cdot ( \mtx{AA}^* - \Id ) \cdot \diag(\mathbf{f}_\ell)^*,
$$
and $\diag(\cdot)$ converts a vector into a diagonal matrix.  We may now calculate the required norms of $\mtx{Y}$.  First,
\begin{equation} \label{eqn:Y-specnorm}
\norm{\mtx{Y}} \leq \norm{\mtx{Z}}
	\leq \norm{\diag(\mathbf{f}_i)} \cdot \norm{\mtx{AA}^* - \Id} \cdot \norm{\diag( \mathbf{f}_\ell )}
	= \frac{1}{n} \norm{\mtx{AA}^* - \Id}
	\leq \frac{1}{n} \normsq{\mtx{A}}.
\end{equation}
The first inequality depends on the convexity of the spectral norm and its invariance under the conjugate transpose.  The third relation holds because the entries of the vectors $\mathbf{f}_i$ and $\mathbf{f}_\ell$ all have magnitude $n^{-1/2}$.  The last inequality follows because $\norm{\mtx{AA}^* - \Id} \leq \max\{ \normsq{\mtx{A}} - 1, 1 \} \leq \normsq{\mtx{A}}$.  For similar reasons,
\begin{equation} \label{eqn:Y-frobnorm}
\fnormsq{\mtx{Y}} \leq \fnormsq{\mtx{Z}}
	= \frac{1}{n^2} \fnormsq{\mtx{AA}^* - \Id}
	< \frac{1}{n^2} \fnormsq{\mtx{AA}^*}
	\leq \frac{1}{n} \normsq{\mtx{A}}.
\end{equation}
The strict inequality holds because $\mtx{AA}^*$ has a unit diagonal, which the identity matrix cancels off.  The final bound follows from the interpolation $\fnormsq{\mtx{P}} \leq \trace(\mtx{P}) \norm{\mtx{P}}$, valid when $\mtx{P}$ is positive semidefinite.

Next, let us instate the hypothesis~\eqref{eqn:fit-hyp} from Proposition~\ref{prop:fit}:
$$
\normsq{\mtx{A}} \leq \frac{\cnst{c_{fit}} \cdot n}{\log^3(1+n)}.
$$
Introduce this assumption into the bounds~\eqref{eqn:Y-specnorm} and~\eqref{eqn:Y-frobnorm}, and apply the Hanson--Wright inequality, Proposition~\ref{prop:hw}, to reach
$$
\Prob{ Y \geq t } \leq 2 \exp\left\{ -\frac{\cnst{c_{hw}}}{\cnst{c_{fit}}} \cdot \log^3(1+n)
	\cdot \min\{ t, t^2 \} \right\}.
$$
We may set the constant $\cnst{c_{fit}} := \cnst{c_{hw}} \cnst{c_{inc}}^2 / 3$.  For the choice $t = \cnst{c_{inc}} / \log(1+n)$, we discover that
$$
\Prob{ Y \geq \frac{\cnst{c_{inc}}}{\log(1+n)} } \leq 2 (1 + n)^{-3}.
$$
In other words, it is unlikely that any single pair of rows from $\mtx{W} = \mtx{SA}$ has an inner product much different from its expectation.

To complete the argument, we unfix the pair $(i, \ell)$ of indices.  Forming a union bound over all $n(n+1)/2$ choices where $i \leq \ell$, we conclude that
$$
\Prob{ \max_{i,\ell} \abs{g_{i\ell} - \delta_{i\ell}} \geq \frac{\cnst{c_{inc}}}{\log(1+n)}}
	\leq (1+n)^{-1}.
$$
Therefore, with high probability $\mtx{W}$ is an incoherent matrix that is nearly standardized.

\section*{Acknowledgments}

We would like to thank Michael Mahoney, Ben Recht, Thomas Strohmer, Steve Wright for helpful discussions about randomized linear algebra and numerical experiments.  Roman Vershynin provided insight on the random paving literature.  Michael McCoy explained advanced plotting techniques in {\sc Matlab}, and Margot Stokol shared her expertise on color theory.  JAT was supported in part by ONR awards N00014-08-1-0883 and N00014-11-1002, AFOSR award FA9550-09-1-0643, DARPA award N66001-08-1-2065, and a Sloan Research Fellowship.


\bibliography{rk-mathscinet}
\bibliographystyle{myalpha}


\end{document}